\newtheorem{theorem}{Theorem}[section]
\newtheorem{lemma}[theorem]{Lemma}
\newtheorem{proposition}[theorem]{Proposition}
\newtheorem{prop-def}[theorem]{Proposition-Definition}
\theoremstyle{definition}
\newtheorem{definition}[theorem]{Definition}
\newtheorem{remark}[theorem]{Remark}
\newtheorem{question}[theorem]{Question}
\newtheorem{project}[theorem]{Project}
\newtheorem{conjecture}[theorem]{Conjecture}
\newtheorem{example}[theorem]{Example}
\numberwithin{equation}{section}
\newcommand{\inth}{\textstyle \int}
\DeclareMathOperator{\gldim}{gldim}
\DeclareMathOperator{\Ext}{Ext}
\DeclareMathOperator{\Aut}{Aut}
\DeclareMathOperator{\injdim}{injdim}
\DeclareMathOperator{\GKdim}{GKdim}
 \DeclareMathOperator{\im}{im}
\newcommand{\fm}{\mathfrak{m}}
\begin{document}

\title{Survey on Hopf algebras of GK-dimension 1 and 2}

\author{K.A. Brown}
\address{School of Mathematics and Statistics\\
University of Glasgow\\ Glasgow G12 8QW\\
Scotland}
\email{ken.brown@glasgow.ac.uk}

\author{J.J. Zhang}
\address{Department of Mathematics\\
Box 354350\\
University of Washington\\
Seattle, WA 98195, USA}
\email{zhang@math.washington.edu}
\begin{abstract}
This is a brief survey on the current state of play on the 
programs to classify infinite dimensional Hopf algebras of 
Gel'fand-Kirillov dimension one or two. We list a number of 
open questions and suggest directions for future work.
\end{abstract}

\subjclass[2010]{16T05; Secondary 17B37,20G42}

%16T05  View Publications (2010-now) Hopf algebras and their applications 
%17B37 View Publications (1991-now) Quantum groups (quantized enveloping algebras) 
%and related deformations [See also 16T20, 20G42, 81R50, 82B23]
%20G42 View Publications (2000-now) Quantum groups (quantized function algebras) 
%and their representations [See also 16T20, 17B37, 81R50]

\keywords{Hopf algebra, Gel'fand-Kirillov dimension, homological integral}

%\thanks{the US National Science Foundation.}

\maketitle

\setcounter{section}{-1}
\section{introduction}
\label{xxsec0}

During the last 20 years, there has been significant progress 
towards a deeper understanding of infinite dimensional Hopf 
algebras satisfying some natural finiteness conditions. There 
has been wide interest in this effort, most notably in the 
programs concerning (both finite and infinite dimensional) 
Nichols algebras led by Andruskiewitsch, Angiono, Heckenberger, 
Schneider and others \cite{AA06, AS00, AS04, AAH19, Ang13}, 
and the program concerning affine noetherian Hopf algebras 
initiated by Brown and Goodearl in 1997 
\cite{BG97, Br98, BZ08, BZ10, LWZ07, WZ03, WLD16}. The subject 
of the present short survey is the classification and analysis 
of infinite dimensional Hopf algebras of low Gel'fand-Kirillov 
dimension. This topic lies at the intersection of the above 
two projects. Thus, as well as yielding results which are of 
interest in their own right, the research which we describe is 
a rich source both of examples and of methods which will prove 
useful in further work on both the above long-term endeavors.

The low-dimensional classification project has necessitated the 
development and refinement over the last 20 years of a number 
of algebraic concepts and homological invariants, for example 
AS-regularity, the Nakayama automorphism, the integral 
order and the integral minor, some of which we shall review here. 
These have played important roles in the further study of infinite 
dimensional noetherian Hopf algebras, strengthening connections to 
noncommutative algebraic geometry and the representation theory 
of quantum groups, with further applications likely in combinatorics, 
representation theory, statistical mechanics, topology and 
mathematical physics. To give one example, the project to 
classify prime AS-Gorenstein Hopf algebras of GK-dimension 1 in 
characteristic 0 has culminated in work of Liu \cite{Li20} which 
provides large families of examples ripe for exploitation in the 
study of tensor categories, support varieties, Hochschild 
cohomology, and so on. On the other hand despite (or rather, 
because of) this recent progress, very many open questions remain, 
many of which we have listed and discussed in what follows. For 
these reasons it is therefore a good time to briefly survey these 
classification programs.

Note that this account does not touch upon the large volume of recent  
significant research concerning finite dimensional (semisimple or 
not) Hopf algebras, or Nichols algebras, or tensor categories. For 
recent surveys on these topics we refer the reader to \cite{And14}, 
\cite{And17}, \cite{EGNO15} respectively. 

\subsection{General setup}
\label{xxsec0.1}
Throughout, the base field will be denoted by $\Bbbk$ and will be 
assumed to be algebraically closed. For many of the results 
described this hypothesis can undoubtedly be weakened, but we will 
not pursue this question. On the other hand, the characteristic of 
the field will often be significant, so we will make clear the 
hypotheses concerning the characteristic of $\Bbbk$ on a case by 
case basis; whenever the characteristic is not mentioned, this 
means that the characteristic is arbitrary. All vector spaces, 
(co)algebras, tensor products, etc. are taken over $\Bbbk$. An 
algebra is called {\it affine} if it is finitely generated over 
$\Bbbk$. We refer to Montgomery's book \cite{Mo93} as a basic 
reference for Hopf algebras. For any Hopf algebra $H$, we denote 
the multiplication, unit map, comultiplication, counit and antipode 
by $m, u, \Delta, \epsilon$ and $S$ respectively. We use the same symbol 
$\Bbbk$ to denote both the trivial $\Bbbk$-algebra, and the 
trivial module over $H$, 
namely, $\Bbbk=H/\ker \epsilon$.  Where not otherwise indicated, 
modules are left modules. Let $H^{op}$ denote the opposite ring of 
$H$, so a right $H$-module can be viewed as a left $H^{op}$-module 
and an $H$-bimodule is identified with a left $H \otimes H^{op}$-module.
Many examples and open questions on the topics discussed here can be found 
in textbooks and in the survey papers \cite{Br98, Br07, BGi14, Go13}. 
In the present paper we will update these sources on their coverage of 
algebras of small GK-dimension, but we recommend them as references 
for wider issues concerning noetherian Hopf algebras.

\subsection{Gel'fand-Kirillov dimension}
\label{xxsec0.2}
Let $A$ be an algebra over $\Bbbk$. The 
{\it Gel'fand-Kirillov dimension} (or {\it GK-dimension} for short) 
of $A$ is defined to be
\begin{equation}
\notag%\label{E0.0.1}\tag{E0.0.1}
\GKdim A:=\sup_{V} \limsup_{n\to\infty} \left(\log_{n} 
(\dim_{\Bbbk} V^n)\right)
\end{equation}
where $V$ runs over all finite dimensional subspaces of $A$. We refer 
to \cite{KL00} for basic properties of GK-dimension of algebras and 
modules. Note that finite dimensional algebras have GK-dimension zero. 
Conversely, every algebra of non-zero GK-dimension is clearly infinite 
dimensional. Examples of Borho-Kraft and Warfield \cite[Theorem 2.9]{KL00},
together with the Bergman Gap Theorem \cite[Theorem 2.5]{KL00} show that 
GK-dimension of an affine algebra can take any value from the set
\begin{equation}\label{E0.0.1}\tag{E0.0.1} 
\{0\} \cup \{1\} \cup [2,\infty].
\end{equation}
The trichotomy \eqref{E0.0.1} motivates the study of Hopf $\Bbbk$-algebras 
of GK-dimension 1 and 2, since these classes constitute a laboratory where 
classification is feasible and where more general conjectures can be tested. 
Note, however, that there is currently no known example of a Hopf algebra
with finite non-integral GK-dimension. Thus we repeat 
\cite[Question F]{BGi14} (which was also asked by Zhuang \cite{Zhu13}).

\begin{question}\cite[Question F]{BGi14}
\label{xxque0.1}
Does every Hopf algebra have either infinite or integral GK-dimension?
\end{question}

\noindent For further general questions on the GK-dimension of 
Hopf algebras, see \cite{BGi14}.

\subsection{Artin-Schelter and Cohen-Macaulay properties}
\label{xxsec0.3}
The following concepts are key to the analysis of the Hopf algebras studied 
in this paper. Recall that a Hopf $\Bbbk$-algebra $H$ is {\it Artin-Schelter 
Gorenstein} ({\it AS-Gorenstein} for short) if
\begin{enumerate}
\item[(AS1)]
$\injdim \,_H{H} = d <\infty$,
\item[(AS2)] 
$\dim_{\Bbbk} \Ext^d_{H}(_H{\Bbbk}, _H H) = 1$, 
$\Ext^i_{H}(_H{\Bbbk}, _H H) = 0$ for all $i\neq  d$,
\item[(AS3)] 
the right $H$-module versions of the conditions (AS1), (AS2) hold.
\end{enumerate}
We say $H$ is {\it Artin-Schelter regular} ({\it AS-regular}
for short) if it is AS-Gorenstein and has finite global dimension; 
in this case $\gldim H = d$. The \emph{homological grade} of a 
non-zero $H$-module $M$ is defined to be 
$j(M) := \mathrm{min}\{j : \mathrm{Ext}^j_H(M,H) \neq 0\} \cup 
\{\infty \}$. Suppose that $H$ has finite GK-dimension. Then $H$ 
is \emph{GK-Cohen-Macaulay} if, for all non-zero finitely 
generated $H$-modules $M$,
$$ j(M) + \mathrm{GKdim} M = \mathrm{GKdim} H. $$
Notice that if $H$ is AS-Gorenstein and GK-Cohen-Macaulay then
$$ \injdim H =  \mathrm{GKdim} H. $$ 

\noindent All known noetherian Hopf algebras are 
AS-Gorenstein \cite{BZ08}. A well-known conjecture 
in the study of noetherian Hopf algebras is the following, 
which was posted in \cite[Question E]{Br07} and 
\cite[Question 3.5]{Go13}.

\begin{conjecture}[Brown-Goodearl Conjecture]
\label{xxcor0.2}
Every noetherian Hopf algebra is AS-Gorenstein.
\end{conjecture}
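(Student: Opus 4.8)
Since the Brown--Goodearl conjecture is still open, what follows is a sketch of a plausible line of attack rather than a complete argument. The plan is to split it into the purely homological finiteness condition (AS1) and the ``rank one'' conditions (AS2), (AS3), and to observe that the latter are essentially forced by the Hopf structure once the former is known.

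\textbf{Step 1 (reduction to finite injective dimension).} First I would invoke the homological integral machinery of \cite{LWZ07}: for a noetherian Hopf algebra $H$ with $\injdim {}_H H = d < \infty$ and with $\Ext^i_H({}_H\Bbbk,{}_HH)$ finite dimensional for all $i$, one shows that $\Ext^i_H({}_H\Bbbk,{}_HH)=0$ for $i\neq d$ and $\dim_\Bbbk \Ext^d_H({}_H\Bbbk,{}_HH)=1$ --- the point being that $\Delta$ equips $\Ext^d_H(\Bbbk,H)$ with enough structure to force it to be an invertible, hence one-dimensional, bimodule --- and symmetrically on the right, so that $H$ is AS-Gorenstein. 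Since $\Bbbk = H/\ker\epsilon$ is finitely presented when $H$ is noetherian, the requisite $\Ext$ groups are at least finitely generated one-sided modules, and one expects finite-dimensionality to come for free in the cases of interest. The finite-dimensional case is classical (Larson--Sweedler: $H$ is Frobenius, $d=0$). Thus the whole conjecture reduces to showing that every noetherian Hopf algebra has finite injective dimension on both sides.

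\textbf{Step 2 (degeneration / filtered--graded transfer).} To bound $\injdim {}_HH$ I would look for an exhaustive $\mathbb N$- or $\mathbb Z$-filtration on $H$ whose associated graded ring $\gr H$ is again noetherian and is \emph{tame} in some sense --- commutative, PI, or connected graded AS-regular. Natural candidates are the coradical filtration (when $H$ is pointed), or a filtration built from a finite generating set together with a suitable ordering (when $H$ is affine and, say, a skew polynomial-type degeneration is available). For such $\gr H$ the Gorenstein property is accessible --- commutative noetherian rings of finite Krull dimension with finite injective dimension are Gorenstein, the affine noetherian PI case is \cite{WZ03}, and the connected graded case is classical --- and then standard filtered--graded lifting gives $\injdim {}_HH \le \injdim {}_{\gr H}(\gr H) < \infty$.

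\textbf{Step 3 (the non-affine or infinite-GK case).} When no workable filtration is visible, I would instead try to present $H$ as a faithfully flat extension $K \subseteq H$ with controllable ``fibre'', taking $K$ to be the Hopf subalgebra generated by the coradical, or (in characteristic $p$) an appropriate affine normal Hopf subalgebra that is ``large'' with respect to Frobenius; one then transfers finite injective dimension from $K$ and from $H/HK^+$ up to $H$ by a change-of-rings/spectral-sequence argument, using Hopf-algebraic freeness and faithful-flatness results in the style of Nichols--Zoeller and its noetherian analogues. \textbf{Main obstacle.} The crux is precisely Steps 2 and 3: at present there is no structure theorem that produces, for an \emph{arbitrary} noetherian Hopf algebra, either a filtration with a tractable associated graded ring or an extension presentation with controllable pieces. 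This is bound up with the open problems of whether noetherian Hopf algebras are automatically affine and whether their GK-dimension is integral (Question \ref{xxque0.1}): without such control there is nothing to induct on, and not even a finite bound on $\injdim {}_HH$ in terms of $\GKdim H$ is known. I would expect that any complete proof must first establish a genuine piece of structure theory --- for instance that every noetherian Hopf algebra is an iterated extension of connected and cosemisimple building blocks --- after which the homological statement should follow relatively formally from Steps 1--3.
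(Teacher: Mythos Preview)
The statement you are addressing is labeled in the paper as a \emph{Conjecture} (the Brown--Goodearl Conjecture), and the paper offers no proof of it whatsoever: it is presented as a well-known open problem, with a reference to \cite{Br07} and \cite{Go13} for further discussion. So there is no ``paper's own proof'' to compare your proposal against, and you correctly identify this at the outset by calling your write-up ``a sketch of a plausible line of attack rather than a complete argument.''

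Given that, your outline is a reasonable and well-informed survey of the known partial results and natural strategies --- the reduction via \cite{LWZ07} to bounding injective dimension, the filtered--graded transfer idea, the PI case handled in \cite{WZ03}, and the faithfully flat descent approach --- and you are honest about where the genuine obstruction lies (Step~2/3: the absence of any structure theorem producing a tractable filtration or extension for an arbitrary noetherian Hopf algebra). That diagnosis matches the current state of the literature. There is no gap to name beyond the one you already flag: the conjecture is open, and your proposal does not close it, nor does it claim to.
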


\noindent 
We refer to \cite{Br98, Br07, BZ08, Go13} for further discussion 
of homological aspects of noetherian Hopf algebras.

\subsection{Organization}
\label{xxsec0.4}
The paper is organized as follows. In Section 1 we consider 
prime Hopf algebras of GK-dimension one, and in Section 2, 
we mainly consider Hopf algebra domains of GK-dimension two, 
with a digression to review Hopf Ore extensions. These 
two sections can be viewed as an expanded version of
\cite[Section 4]{Go13}. Section 3 contains a review and brief 
commentary on the many questions stated in Sections 0, 1 and 2.

\section{Hopf algebras of GK-dimension 1}
\label{xxsec1}

In this section we study Hopf algebras which are prime of 
GK-dimension one. By a famous result of Small and Warfield 
\cite{SW84}, when such an algebra (whether Hopf or not) is 
affine it is a finite module over its affine center, so it 
is noetherian and satisfies a polynomial identity. Thus if 
$H$ is a prime affine Hopf algebra of GK-dimension 1, it is 
AS-Gorenstein and GK-Cohen-Macaulay by 
\cite[Theorems 0.1 and 0.2]{WZ03}. In particular, the injective 
dimension of $H$ (and its global dimension if $H$ is regular) 
are both 1.

\subsection{Preliminaries and key examples}
\label{xxsec1.1}

Recall the classical result that the only connected algebraic 
groups of dimension one over an algebraically closed field 
$\Bbbk$ are the additive and multiplicative groups of $\Bbbk$, 
see for example, \cite[Theorem 20.5]{Hu75} or \cite[Theorem III.10.9]{Bo91}. 
Translating to the language of Hopf algebras, the only affine 
commutative Hopf $\Bbbk$-algebra domains of GK-dimension 1 
are $\Bbbk [X]$ and $\Bbbk [X^{\pm 1}]$. But Tsen's theorem 
\cite[p. 374]{Co77} permits us to omit the commutative hypothesis, 
so combining \cite[Corollary 7.8 (a)$\Rightarrow$(b)]{LWZ07}
with \cite[Proposition 2.1]{GZ10} yields the following.

\begin{lemma} \cite[Proposition 2.1(a,b)]{GZ10}, 
\cite[Lemma 5.1]{BZ20}
\label{xxlem1.1} 
The only affine Hopf $\Bbbk$-algebra domains of GK-dimension 
one are the coordinate rings $\Bbbk [X]$ and $\Bbbk [X^{\pm 1}]$ 
of $(\Bbbk, +)$ and $(\Bbbk^{\times}, \times)$ respectively. 
\end{lemma}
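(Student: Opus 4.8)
The plan is to reduce the problem to the classical classification of one-dimensional connected algebraic groups by first producing a large commutative Hopf subalgebra, and then bootstrapping from that subalgebra to all of $H$. Let $H$ be an affine Hopf $\Bbbk$-algebra domain with $\GKdim H = 1$. First I would invoke the cited result of Small and Warfield: an affine algebra of GK-dimension one is a finite module over its centre $Z(H)$, which is itself affine, so $H$ is noetherian and satisfies a polynomial identity. In particular $Z(H)$ is an affine commutative domain, and since $H$ is a finite $Z(H)$-module we have $\GKdim Z(H) = \GKdim H = 1$, so $Z(H)$ is the coordinate ring of an affine curve.

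The crux is to pass to a \emph{Hopf} subalgebra. Here I would appeal to \cite[Corollary 7.8 (a)$\Rightarrow$(b)]{LWZ07}, which (for a prime affine Hopf algebra of GK-dimension one that is PI) guarantees the existence of a normal commutative Hopf subalgebra $C$ over which $H$ is a finite module; equivalently, the argument shows $H$ contains a central, or at least normal, affine commutative Hopf subalgebra of GK-dimension one. Being a commutative affine Hopf domain of GK-dimension one, $C$ is the coordinate ring of a one-dimensional connected affine algebraic group, hence by the classical result quoted just before the lemma (\cite[Theorem 20.5]{Hu75} or \cite[Theorem III.10.9]{Bo91}) we have $C \cong \Bbbk[X]$ or $C \cong \Bbbk[X^{\pm 1}]$. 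Then \cite[Proposition 2.1]{GZ10} finishes the job: it shows that a Hopf algebra domain which is a finite module over such a $C$ must in fact equal $C$ --- the extension cannot be proper, because any proper finite extension of $\Bbbk[X]$ or $\Bbbk[X^{\pm 1}]$ inside a Hopf domain would force either zero-divisors or a contradiction with the coalgebra structure (e.g. with grouplikes, or with the fact that $\Bbbk[X]$ and $\Bbbk[X^{\pm1}]$ are integrally closed and already "maximal" among such Hopf overrings). The role of Tsen's theorem in the remark preceding the lemma is exactly to remove a division-ring obstruction at this last step, allowing one to drop the commutativity hypothesis that appears in the purely group-theoretic statement.

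The main obstacle, and the step I would spend the most care on, is the construction of the commutative normal Hopf subalgebra $C$ --- that is, the content imported from \cite{LWZ07}. It is not automatic that a noetherian PI Hopf algebra contains a decent commutative Hopf subalgebra: the centre need not be a Hopf subalgebra, and one must instead use the structure theory of prime PI Hopf algebras (e.g. via the affine component of the identity, or via the coradical filtration together with the fact that $H$ is a domain, which severely restricts the grouplikes and skew-primitives). Once $C$ is in hand, the descent from $C$ to $H$ via \cite{GZ10} and the identification of $C$ via the algebraic-groups dichotomy are comparatively formal. I would therefore present the proof as: (1) Small--Warfield to get PI and noetherian; (2) cite \cite[Corollary 7.8]{LWZ07} for the commutative normal Hopf subalgebra $C$ with $H$ finite over $C$; (3) classify $C$ by one-dimensional algebraic groups; (4) apply \cite[Proposition 2.1]{GZ10} to conclude $H = C$.
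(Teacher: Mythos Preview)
Your proof is more circuitous than the paper's, and you have slightly misplaced the role of Tsen's theorem. The paper's argument (sketched in the paragraph immediately preceding the lemma, and confirmed by Remark~\ref{xxrem1.2}) uses Tsen \emph{up front} to show that $H$ itself is commutative: since $H$ is a prime affine PI domain of GK-dimension one, its Goldie quotient ring is a division algebra finite over a function field of transcendence degree one over $\Bbbk$, and Tsen's theorem forces this division algebra to equal that field; hence $H$ embeds in a field and is commutative. Once commutativity is established, the classical dichotomy for one-dimensional connected algebraic groups gives the conclusion directly. The citations to \cite{LWZ07} and \cite{GZ10} are then just pointers to places where this argument is written out.

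Your route---extract a commutative normal Hopf subalgebra $C$ from \cite[Corollary 7.8]{LWZ07}, classify $C$, then argue $H=C$ via \cite[Proposition 2.1]{GZ10} with Tsen entering only at the last step---is not wrong in spirit, but it inserts an unnecessary intermediate object. You never need the subalgebra $C$: Tsen already forces commutativity of $H$ itself, so steps (2) and (4) of your plan collapse. Your reading of \cite[Proposition 2.1]{GZ10} as a ``finite-over-$C$ implies $H=C$'' statement is also likely a misinterpretation; that proposition is essentially the lemma itself, proved via the direct Tsen argument. So your proposal is defensible but obscures the simple mechanism the paper actually has in mind.
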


\begin{remark}
\label{xxrem1.2}
In fact one can at least partly remove from Lemma \ref{xxlem1.1} 
the hypothesis that $H$ is affine: Tsen's theorem still ensures 
that the algebras involved are commutative, so $S^2 = \mathrm{Id}$ 
by \cite[Corollary 1.5.12]{Mo93} and hence $H$ is a union of 
affine Hopf subalgebras. Let $H$ be a non-affine Hopf 
$\Bbbk$-algebra domain of GK-dimension one. The following 
statements follow from Lemma \ref{xxlem1.1}. 
\begin{enumerate}
\item[(1)]
\cite[Proposition  2.1(c)]{GZ10}
Assume that $\Bbbk$ has characteristic 0. Then $H$ is a
group algebra $\Bbbk G$ where $G$ is a non-cyclic torsion-free
abelian group of rank one, i.e., a non-cyclic subgroup of 
$\mathbb{Q}$.
\item[(2)]
When $\Bbbk$ has positive characteristic and $H$ contains a 
nontrivial grouplike element, then $H$ is a group algebra $\Bbbk G$ 
where $G$ is a non-cyclic torsion-free abelian group of rank one.
\item[(3)]
If $H$ does not contain a nontrivial grouplike element, then 
$\Bbbk$ has positive characteristic, say $p$. If further $H$ 
is countably generated, then one can check that 
$$ H= \Bbbk \langle X_i : \, [X_i,X_j] = 0, \, 
X_i=F_i(X_{i+1}), \, (i,j \geq 1) \rangle,$$
with $X_i$ primitive for all $i$, where $\{F_i(t)\}_{i\geq 1}$
is a sequence of polynomials of the form
$$F_i(t)=\sum_{s\geq 0} \alpha_{i,s} t^{p^s}$$
for some $\alpha_{i,s}\in \Bbbk$.
\end{enumerate}
\end{remark}

\noindent The following issue thus remains to be clarified.

\begin{question} 
\label{xxque1.3} 
Let $\Bbbk$ have positive characteristic, and let $H$ be an 
uncountably generated Hopf domain over $\Bbbk$ of GK-dimension 
one. What is the structure of $H$?
\end{question}

For prime Hopf algebras of GK-dimension one which are not domains, 
the story is much more complicated and as yet incomplete even in 
characteristic 0. We begin by recalling some key examples.

\begin{example}
\label{xxex1.4} 
Let ${\mathbb D}$ be the infinite Dihedral group 
$\langle g,x\mid g^2=1, gxg=x^{-1}\rangle$ and let $H$ be the group 
algebra $\Bbbk {\mathbb D}$. Since ${\mathbb D}$ has no nontrivial 
finite normal subgroups, $H$ is prime by Connell's theorem 
\cite[Theorem 4.2.10]{Pas77}, and $H$ is regular if and only if 
$\Bbbk$ does not have characteristic 2, see \cite[Theorem 10.3.13]{Pas77}.
\end{example}

The second family is also well-known, 
sometimes called the {\it infinite Taft algebras}. 

\begin{example} 
\cite{Ta71}, \cite[Examples 2.7 and 7.3]{LWZ07}
\label{xxex1.5}
Let $n$ and $t$ be positive integers with $n \geq 2$ and 
$1 \leq t \leq n$, and let $\xi\in \Bbbk$ be a primitive $n$th 
root of 1. Let $H := H(n,t,\xi)$ be the $\Bbbk$-algebra 
generated by $x$ and $g$ subject to the relations
$$g^n = 1, \quad {\text{and}} \quad xg = \xi gx,$$
so $H$ is not commutative. The coalgebra structure of $H$ 
is defined by
$$\Delta(g)= g\otimes g, \qquad \epsilon(g) = 1$$ 
and 
$$\Delta (x) = x \otimes 1 + g^t \otimes x, \qquad \epsilon(x)=0.$$
Thus $H$ is cocommutative if and only if $t = n$. The antipode $S$ 
of $H$ is defined by
$$S(g) = g^{-1}, \quad {\text{and}} \quad
S(x) = -g^{-t}x = -\xi^{t}xg^{-t}.$$
Then $H$ is affine prime AS-regular of GK-dimension one, with 
center $Z(H) = \Bbbk [x^n]$.
\end{example}

In 2009 Liu discovered a new family of Hopf algebras of 
GK-dimension one: 

\begin{example} \cite[Proposition 2.1]{Li09}
\label{xxex1.6}
Let $n\geq 2$ and $\xi\in \Bbbk$ be a primitive $n$th root of 1.
Let $A(n,\xi)$ be generated by $x, g$ and the inverse $g^{-1}$, 
with relations
$$xg = \xi gx, \quad {\text{and}} \quad x^n = 1- g^n.$$
The coalgebra structure of $A(n,\xi)$ is defined by
$$\Delta(g)= g\otimes g, \qquad \epsilon(g) = 1$$ 
and 
$$\Delta (x) = x \otimes 1 + g \otimes x, \qquad \epsilon(x)=0.$$
Then $A(n,\xi)$ is affine prime AS-regular of GK-dimension one 
with center $\Bbbk [x^n]$.
\end{example}

\subsection{Integral order and minor}\label{xxsec1.2}
The project to classify prime regular Hopf algebras was begun 
in \cite{LWZ07}. The  homological integral was defined 
there - this has proved crucial for further analysis of 
AS-Gorenstein Hopf algebras, including classification 
projects. In this subsection we review the basic definitions, 
generalising the classical integrals of finite dimensional Hopf 
algebras \cite[Chapter 2]{Mo93}.

\begin{definition}\cite[Definition 1.1]{LWZ07}
\label{xxdef1.7}
Let $H$ be an AS-Gorenstein Hopf algebra of injective dimension 
$d$. Any nonzero element in $\Ext^d_{H}(_H{\Bbbk}, _H H)$ is 
called a {\it left homological integral} of $H$. Write 
$\inth^l := \Ext^d_{H}(_H{\Bbbk}, _H H)$. A nonzero element in 
$\inth^r  := \Ext^d_{H^{op}} ({\Bbbk}_{H},H_{H})$ is called a 
{\it right homological integral} of $H$. By abuse of language 
we also call $\inth^l$ and $\inth^r$ the {\it left} and 
{\it right homological integrals} of $H$ respectively.
\end{definition}

Notice that $\inth^l$ is an $H$-bimodule of $\Bbbk$-dimension 1, 
trivial as a left $H$-module but in general nontrivial on the 
right. For any 1-dimensional left (or right) $H$-module $M$,
we can write it as $H/I$ for an ideal $I$ with 
$\mathrm{dim}_{\Bbbk}H/I = 1$, or equivalently view it as an algebra
homomorphism $\phi_{M}: H\to H/I$. Then $\phi := \phi_{M}$
is a grouplike element in the dual Hopf algebra $H^{\circ}$, whose 
order is denoted by $o(M)\in {\mathbb{N}}\cup\{\infty\}$. 
Equivalently, $o(M)$ is the least positive integer $m$ such that 
$M^{\otimes m}$ is the trivial module $\Bbbk$; so $o(M) = 1$ if and 
only if $M = \Bbbk$.

Continuing with the above notation, the {\it left winding
automorphism} of $H$ associated to $\phi$ is defined
to be 
$$\Xi^l_{\phi}: h\to \sum \phi(h_1) h_2, \quad \forall \; h\in H,$$
and the {\it right winding automorphism} associated to $\phi$ is  
$$\Xi^r_{\phi}: h\to \sum h_1 \phi(h_2), \quad \forall \; h\in H.$$
Both $\Xi^l_{\phi}$ and $\Xi^r_{\phi}$ are in the group 
$\Aut_{alg}(H)$ of algebra automorphisms of $H$. Let $G^l_{\phi}$ 
[resp. $G^r_{\phi}$] (or $G^l_M$ [resp. $G^r_M$]) denote the 
subgroups of $\Aut_{alg}(H)$ generated by 
$\Xi^l_{\phi}$ [resp. $\Xi^r_{\phi}$]. 

\begin{definition}
\label{xxdef1.8}
Let $H$ be an AS-Gorenstein Hopf algebra.
\begin{enumerate}
\item[(1)]
\cite[Definition 2.2]{LWZ07}
The {\it integral order} of $H$ is  
$$io(H):=o(\inth^{l}) = |G^l_{\int^l}| \in {\mathbb{N}}\cup\{\infty\},$$
where $\inth^l$ is considered as a right $H$-module.
\item[(2)] 
$H$ is called {\it unimodular} if $io(H)=1$. 
\item[(3)] \cite[Definition 2.4]{BZ10} Suppose $io(H)< \infty$. The 
{\it integral minor} of $H$ is 
$$im(H):=|G^l_{\int^l}/(G^l_{\int^l}\cap G^r_{\int^l})|.$$
where the intersection is taken inside $\Aut_{alg}(H)$.
\end{enumerate}
\end{definition}

By \cite[Lemma 2.1]{LWZ07}, $io(H)$ equals $o(\inth^r)$, viewing 
$\inth^r$ as a left $H$-module. So it is not hard to show \cite{LWZ07, BZ10} 
that
$$ io(H) = |G^r_{\int^l}| = |G^l_{\int^r}| = |G^r_{\int^r}|.$$
Clearly $im(H)$ divides $io(H)$ if $io(H)$ is finite. For the Taft algebras
$H = H(n,t,\xi)$ of Example \ref{xxex1.5} it is not hard to check that 
$io(H) = n$ and $im(H) = n/\mathrm{gcd}(n,t)$, \cite[page 273]{BZ10}.

\subsection{Classification results}
\label{xxsec1.3}
The connection between the concepts just defined and classification 
was made clear in \cite{LWZ07}, with the following result. 

\begin{theorem} \cite[Theorem 7.1]{LWZ07}
\label{xxthm1.9}
Let $H$ be an affine prime AS-regular Hopf algebra of 
GK-dimension one. Then $io(H)<\infty$ and it equals the 
PI-degree of $H$. As a consequence, $H$ is unimodular if 
and only if $H$ is commutative. 
\end{theorem}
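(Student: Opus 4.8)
The plan is to prove the two assertions of Theorem~\ref{xxthm1.9} in turn, working with the homological integral $\inth^l$ and the winding automorphisms introduced in Section~\ref{xxsec1.2}. First I would set $d = 1$, since an affine prime AS-regular Hopf algebra of GK-dimension one has $\injdim H = \gldim H = 1$ by the discussion following Lemma~\ref{xxlem1.1} (Small--Warfield plus \cite{WZ03}); in particular $H$ is noetherian, PI, and a finite module over its affine centre $Z$, so its PI-degree $p := \mathrm{PIdeg}(H)$ is finite and well-defined. The integral $\inth^l$ is a one-dimensional $H$-bimodule, trivial on the left, corresponding to a grouplike $\phi \in H^\circ$ whose order is $io(H)$; the task is to show $io(H) = p$.

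To bound $io(H)$ above by $p$, I would exploit the PI structure: because $H$ is a finite module over its centre $Z$, every simple $H$-module has $\Bbbk$-dimension dividing $p$, and the one-dimensional modules $(\inth^l)^{\otimes m}$ are in particular simple of dimension $1$. The key point is that the winding automorphism $\Xi^r_\phi$ fixes $Z$ pointwise (winding automorphisms are the identity on the centre, since for central $z$ one has $\sum z_1 \phi(z_2) = \phi$-evaluated on a central grouplike expansion, which collapses), so $\Xi^r_\phi$ induces an automorphism of $H$ as a $Z$-algebra; such automorphisms of a prime PI ring that is finite over its centre have order dividing the PI-degree, via the reduced trace / generic matrix argument, or equivalently by reducing modulo a maximal ideal of $Z$ in general position to land in $M_p(\Bbbk)$ where inner-by-central automorphisms have order dividing $p$. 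This gives $io(H) = o(\phi) = |G^r_\phi| \mid p$, so $io(H) \leq p$.

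For the reverse inequality $p \leq io(H)$, the idea is that $H$ is ``almost'' an Azumaya-type crossed product over $Z$: one localises $H$ at the simple artinian quotient ring $Q = \mathrm{Frac}(Z) \otimes_Z H \cong M_p(D)$ for a division ring $D$, and shows that the grouplike $\phi$ together with its winding automorphism exhibits $Q$ (or its maximal order) as built from the trivial module by successive extensions tracked by powers of $\phi$; concretely, in the classification of \cite{LWZ07} the PI-degree is realised by an element whose $n$-fold product with itself is central, forcing $o(\phi) \geq p$. Alternatively, and perhaps more cleanly, I would cite the structural input from \cite[Corollary 7.8]{LWZ07} together with \cite[Proposition 2.1]{GZ10}: the regular Hopf algebras in question all have the form where a ``fraction'' $x$ satisfies $xg = \xi g x$ and $x^n$ central with $\xi$ a primitive $n$th root of unity, exactly as in Examples~\ref{xxex1.5} and~\ref{xxex1.6}, and for those $io(H) = n = \mathrm{PIdeg}(H)$ by the computation cited after Definition~\ref{xxdef1.8}. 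The two inequalities give $io(H) = p$.

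Finally, for the consequence: $H$ is unimodular iff $io(H) = 1$ iff $p = 1$. But a prime PI ring of PI-degree $1$ is commutative, and conversely a commutative domain (here $H$ would be a domain by Lemma~\ref{xxlem1.1}, being commutative affine of GK-dimension one) has PI-degree $1$; so $io(H) = 1 \iff H$ commutative. I expect the main obstacle to be the lower bound $p \leq io(H)$: showing that the integral's order is forced to be \emph{as large as} the PI-degree genuinely uses the fine structure of prime regular Hopf algebras of GK-dimension one, and the honest route is to invoke the structural results of \cite{LWZ07, GZ10} rather than a soft general argument — a purely formal proof that $o(\phi) \geq \mathrm{PIdeg}(H)$ for an arbitrary AS-regular prime PI Hopf algebra of GK-dimension one seems unlikely without that input.
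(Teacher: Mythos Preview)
The paper is a survey and does not supply a proof of Theorem~\ref{xxthm1.9}; it simply cites \cite[Theorem~7.1]{LWZ07}. So there is no argument here to compare your proposal against, and the proposal has to stand on its own. It does not: both halves of the inequality $io(H)=\mathrm{PIdeg}(H)$ have genuine gaps.

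For the upper bound $io(H)\le p$, the claim that $\Xi^r_{\phi}$ fixes the centre $Z$ pointwise is unsupported. Centrality of $z$ is a purely multiplicative statement and says nothing about $\Delta(z)$; the phrase ``central grouplike expansion'' has no content, and the displayed reasoning does not establish $\sum z_1\phi(z_2)=z$. Even if one grants that $\Xi^r_{\phi}$ is a $Z$-algebra automorphism, the next step --- that such automorphisms of a prime PI ring finite over its centre have order dividing the PI-degree, via reduction to $M_p(\Bbbk)$ --- is false as stated: inner automorphisms of $M_p(\Bbbk)$ form $PGL_p(\Bbbk)$, which contains elements of every finite order and, in characteristic~$0$, of infinite order. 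Nothing in your sketch pins down why the particular automorphism $\Xi^r_{\phi}$ should land in a cyclic subgroup of order $p$.

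For the lower bound $p\le io(H)$, the proposed route is circular. You suggest invoking \cite[Corollary~7.8]{LWZ07} and \cite[Proposition~2.1]{GZ10} (the latter is Lemma~\ref{xxlem1.1} here), but both sit \emph{downstream} of \cite[Theorem~7.1]{LWZ07} in the logical order: the classification of prime regular Hopf algebras of GK-dimension one is obtained \emph{using} the equality $io(H)=\mathrm{PIdeg}(H)$, so one cannot recover that equality from the classification. You are right that this direction is the substantive one; the actual argument in \cite{LWZ07} proceeds through a direct analysis relating the winding-automorphism invariants to the centre and the rank of $H$ over it, and there is no soft shortcut.
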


The commutative algebras which occur in Theorem \ref{xxthm1.9} 
are listed in Lemma \ref{xxlem1.1}. Progress was achieved 
towards a classification of the noncommutative ones in 
characteristic 0 by analysing their integral order and minor. 
Here are the two key theorems from \cite{BZ10}.

\begin{theorem} \cite[Theorem 4.1]{BZ10}
\label{xxthm1.10}
Assume that $\Bbbk$ has characteristic 0, and let $H$ be an 
affine prime AS-regular Hopf $\Bbbk$-algebra of 
GK-dimension one. Suppose $io(H)>1$ and $im(H)=1$. Then
$H$ is one of the following.
\begin{enumerate}
\item[(1)]
$H$ is the Taft algebra $H(n,n,\xi)$ of Example \ref{xxex1.5}. 
In this case $io(H)=n$.
\item[(2)]
$H$ is the Dihedral group algebra $\Bbbk {\mathbb D}$ of 
Example \ref{xxex1.4}. In this case $io(H)=2$.
\end{enumerate}
In both cases, $H$ is cocommutative.
\end{theorem}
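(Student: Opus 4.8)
The plan is to exploit the two numerical invariants $io(H)$ and $im(H)$ together with the general structure theory of AS-regular prime Hopf algebras of GK-dimension one. First I would invoke Theorem \ref{xxthm1.9}: since $H$ is affine prime AS-regular of GK-dimension one, $io(H)$ is finite and equals the PI-degree $n$ of $H$. The hypothesis $io(H) > 1$ then says $H$ is genuinely noncommutative with PI-degree $n \geq 2$. Next I would bring in the classification machinery that the left and right winding automorphism groups $G^l_{\int^l}$ and $G^r_{\int^l}$ impose on the structure of $H$. The condition $im(H) = 1$ forces $G^l_{\int^l} = G^l_{\int^l} \cap G^r_{\int^l}$, i.e. $G^l_{\int^l} \subseteq G^r_{\int^l}$; combined with $|G^l_{\int^l}| = |G^r_{\int^l}| = io(H)$ this gives $G^l_{\int^l} = G^r_{\int^l}$ inside $\Aut_{alg}(H)$. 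So the plan's first substantive step is: extract the strong constraint that the left and right winding automorphisms attached to the integral generate the \emph{same} cyclic subgroup of order $n$ in $\Aut_{alg}(H)$.

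Second, I would analyze the algebra $H$ relative to its center and its invariant subring under this common winding group. The center is $Z = Z(H)$, an affine commutative domain of GK-dimension one (since $\GKdim H = 1$ and $H$ is a finite module over $Z$ by Small–Warfield), hence by Lemma \ref{xxlem1.1} $Z$ is either $\Bbbk[z]$ or $\Bbbk[z^{\pm 1}]$. The key point is to show that the winding automorphism $\Xi := \Xi^l_{\int^l}$ acts on $H$ with fixed subring closely related to (a Hopf subalgebra of) $Z$, and that $H$ decomposes as a crossed product or skew group-type extension of this fixed ring by the cyclic group $\langle \Xi \rangle \cong \mathbb Z/n$. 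Here the equality $G^l_{\int^l} = G^r_{\int^l}$ is what makes the left and right module structures compatible enough to pin the algebra down. One then reads off generators: a grouplike-type element $g$ implementing $\Xi$ (or its inverse) up to inner, and an element $x$ spanning a one-dimensional piece, with the coproduct of $x$ of skew-primitive form $x \otimes 1 + g^t \otimes x$. At this stage the possibilities for the relation satisfied by $x$ — whether $x^n = 0$ (or central), giving a Taft-type algebra, versus $g$ having finite order $2$ with $x$ genuinely invertible-adjacent, giving the dihedral algebra — have to be separated.

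Third, I would handle the case division. In characteristic $0$ the finite-order behavior of grouplikes is rigid, and the integral order/minor data should force exactly two scenarios: (i) $g$ has order $n$, $x g = \xi g x$ with $\xi$ a primitive $n$th root of unity, and the coproduct is cocommutative, i.e. $t = n$ in the notation of Example \ref{xxex1.5} — this is precisely $H(n,n,\xi)$; the computation $im(H) = n/\gcd(n,t)$ recorded after Definition \ref{xxdef1.8} shows $im(H) = 1$ forces $t = n$, which is the crucial consistency check; (ii) $n = 2$ and the relations degenerate to those of $\Bbbk\mathbb D$ as in Example \ref{xxex1.4}. The hard part will be step two: showing that the common winding subgroup of order $n$ actually produces a crossed-product presentation with the specific skew-primitive coproduct, rather than merely constraining the numerics — this requires carefully tracking how $\Delta$ interacts with $\Xi^l_{\int^l}$ and $\Xi^r_{\int^l}$ and ruling out, via the $im(H)=1$ hypothesis and primeness, any twisted or non-cocommutative alternative beyond the dihedral case. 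The cocommutativity assertion at the end then follows because both surviving algebras, $H(n,n,\xi)$ and $\Bbbk\mathbb D$, are visibly cocommutative (indeed $\Bbbk\mathbb D$ is a group algebra and $H(n,n,\xi)$ has $t=n$), so no separate argument is needed once the classification into these two families is complete.
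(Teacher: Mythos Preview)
This survey does not prove Theorem \ref{xxthm1.10}; the statement is simply quoted from \cite[Theorem~4.1]{BZ10} and the paper moves directly on to Theorem \ref{xxthm1.11}. There is therefore no proof in the present paper against which to compare your proposal.

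That said, a couple of remarks on your sketch are in order. Your opening moves are sound: Theorem \ref{xxthm1.9} gives $io(H)=n=\mathrm{PIdeg}\,H$, and $im(H)=1$ together with $|G^l_{\int^l}|=|G^r_{\int^l}|$ does force $G^l_{\int^l}=G^r_{\int^l}$, which is indeed the structural input exploited in \cite{BZ10}. But your second step contains a circularity: you invoke Lemma \ref{xxlem1.1} to conclude that $Z(H)\cong\Bbbk[z]$ or $\Bbbk[z^{\pm 1}]$, yet Lemma \ref{xxlem1.1} applies only to Hopf algebra domains, and $Z(H)$ is not in general a Hopf subalgebra. An arbitrary affine commutative domain of GK-dimension one over $\Bbbk$ is the coordinate ring of a smooth affine curve and need not be a (Laurent) polynomial ring. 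Indeed, Proposition \ref{xxpro1.17}(1) of this very paper derives the description of $Z(H)$ \emph{from} the classification of Subsection \ref{xxsec1.3}, not as an input to it. The argument in \cite{BZ10} works instead with a genuine commutative Hopf subalgebra (an invariant subring under the winding action), to which Lemma \ref{xxlem1.1} legitimately applies, and then analyses $H$ as an extension of that subalgebra. Your third step is also rather impressionistic: the dichotomy between the Taft and dihedral outcomes is not simply a matter of reading off relations once generators are in hand, and you have not indicated any mechanism that would exclude other coalgebra structures or force $n=2$ in the non-Taft case. So while the broad shape of your plan is right, the substantive middle portion relies on a fact you are not yet entitled to, and the endgame remains to be filled in.
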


The next result classifies all Hopf algebras in Theorem \ref{xxthm1.9} 
with $im(H)=io(H)$. To save space we will not give the definition
of the generalized Liu algebras, denoted by $B(n,w,\xi)$, see 
\cite[Section 3.4]{BZ10} for details. Here $\xi$ is a primitive $n$th 
root of 1 and $w$ is a positive integer. When $\mathrm{gcd}(n,w) =1$ 
one obtains precisely Liu's algebras from Example \ref{xxex1.6}.

\begin{theorem} \cite[Theorem 6.1]{BZ10}
\label{xxthm1.11}
Assume that $\Bbbk$ has characteristic 0, and let $H$ be an affine 
prime AS-regular Hopf algebra of 
GK-dimension one. Suppose $io(H)=im(H)>1$. Then
$H$ is one of the following.
\begin{enumerate}
\item[(1)] The Taft algebra $H(n,1,\xi)$ of 
Example \ref{xxex1.5}. 
\item[(2)] The generalized Liu algebra $B(n,w,\xi)$. 
\end{enumerate}
In both cases, $io(H)=n$ and $H$ is not cocommutative.
\end{theorem}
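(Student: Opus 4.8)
The plan is to follow the strategy that proved successful in the companion case $im(H)=1$ (Theorem \ref{xxthm1.10}), adapting it to the opposite extreme $io(H)=im(H)$. By Theorem \ref{xxthm1.9} we may set $n := io(H) = \mathrm{PI\text{-}deg}(H) > 1$, and by \cite[Theorems 0.1 and 0.2]{WZ03} the algebra $H$ is AS-regular of global (hence injective) dimension one and GK-Cohen-Macaulay. The first step is to extract structural consequences of the numerical hypothesis $io(H)=im(H)=n$: since $im(H)=|G^l_{\int^l}/(G^l_{\int^l}\cap G^r_{\int^l})|$ divides $io(H)=|G^l_{\int^l}|$, the equality forces $G^l_{\int^l}\cap G^r_{\int^l}=\{1\}$, i.e. the left and right winding automorphisms attached to the integral generate a rank-$2$ (rather than rank-$1$) picture inside $\Aut_{alg}(H)$. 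In particular $H$ cannot be cocommutative, since for a cocommutative $H$ one has $\Xi^l_\phi=\Xi^r_\phi$; this already explains the final sentence of the statement and separates this case cleanly from Theorem \ref{xxthm1.10}.

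Next I would analyse the grouplike elements and the structure of the order-$n$ cyclic group $G^l_{\int^l}$ acting on $H$. As in \cite{LWZ07, BZ10}, the homological integral $\inth^l$ determines a grouplike $\pi$ of order $n$ in $H^\circ$, and one studies the associated $\mathbb{Z}/n$-grading (or the fixed ring $H^{\Xi^l_\pi}$, which is a commutative affine Hopf algebra of GK-dimension one, hence $\Bbbk[y]$ or $\Bbbk[y^{\pm1}]$ by Lemma \ref{xxlem1.1}). Working over this fixed ring and using that $H$ is a finite module over its affine center $Z(H)$, one builds up $H$ as a crossed product / generalised Taft-type construction: there must be a grouplike $g$ and a skew-primitive-type element $x$ with $xg = \xi gx$ for $\xi$ a primitive $n$th root of unity, $\Delta(x) = x\otimes 1 + g\otimes x$ (forced by the GK-dimension-one and AS-regularity constraints on the coradical filtration), and a relation expressing $x^n$ in terms of $g$. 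The dichotomy in the conclusion — Taft algebra $H(n,1,\xi)$ versus generalised Liu algebra $B(n,w,\xi)$ — should emerge according to whether $g$ has infinite order (giving $x^n$ equal to a scalar, normalisable to recover $H(n,1,\xi)$, since the comultiplication of $x$ has $g$ rather than $g^t$) or $g$ has finite order, in which case the central element $x^n$ must equal $1-g^n$ up to the parameter $w$ recording $\gcd(n,w)$, yielding $B(n,w,\xi)$.

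The main obstacle, I expect, is the rigidity argument that pins down the coalgebra structure and the shape of the relation $x^n = \alpha + \beta g^{n}$ (or $=\alpha$): one must rule out more complicated relations and show that the constraints (primitive or skew-primitive generator, $xg=\xi gx$, AS-regularity of global dimension one, GK-dimension exactly one, integral minor equal to integral order) leave no room for other parameters. Concretely, the delicate point is to prove that the element $x$ satisfying $\Delta(x)=x\otimes 1+g\otimes x$ with $g$ grouplike — rather than $g^t$ for $1<t<n$ — is forced; this is precisely the feature distinguishing Theorem \ref{xxthm1.11} from Theorem \ref{xxthm1.10}, and it is the place where the hypothesis $im(H)=io(H)$ (equivalently $G^l_{\int^l}\cap G^r_{\int^l}=\{1\}$) must be used decisively, via a comparison of the winding automorphism $\Xi^r$ with the conjugation action of $g$. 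Once the presentation is determined, checking that the resulting algebra is indeed $H(n,1,\xi)$ or some $B(n,w,\xi)$, and that these have $io=n$ and are not cocommutative, is a direct verification from Examples \ref{xxex1.5}, \ref{xxex1.6} and the definition of $B(n,w,\xi)$ in \cite[Section 3.4]{BZ10}.
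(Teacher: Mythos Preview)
The paper under review is a survey and does not contain a proof of Theorem \ref{xxthm1.11}; the result is simply quoted from \cite[Theorem 6.1]{BZ10}. So there is nothing in this paper to compare your proposal against directly. That said, your overall architecture --- use the winding automorphism attached to $\inth^l$ to produce a $\mathbb{Z}/n$-grading, identify the degree-zero piece as a commutative Hopf domain of GK-dimension one (hence $\Bbbk[t]$ or $\Bbbk[t^{\pm 1}]$ by Lemma \ref{xxlem1.1}), and then rebuild $H$ from a grouplike $g$ and a $(g,1)$-skew primitive $x$ satisfying $xg=\xi gx$ --- is the right shape and matches the approach of \cite{BZ10}.

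However, your description of the final dichotomy is backwards, and this is not a cosmetic slip. In the Taft algebra $H(n,1,\xi)$ the grouplike $g$ has \emph{finite} order: the defining relation $g^n=1$ is part of Example \ref{xxex1.5}. Moreover $x^n$ is not a scalar there; it is a free central indeterminate, with $Z(H(n,1,\xi))=\Bbbk[x^n]$. Conversely, in the (generalised) Liu algebras the grouplike has \emph{infinite} order --- in Example \ref{xxex1.6} one adjoins $g^{-1}$ and imposes no torsion on $g$ --- and a relation of the type $x^n = 1-g^{n}$ (more generally involving the parameter $w$) is required precisely to force the GK-dimension back down to one. Thus the case split in \cite{BZ10} runs: invariant subring $\cong \Bbbk[t]$ (equivalently, finite-order grouplike) yields the Taft family; invariant subring $\cong \Bbbk[t^{\pm 1}]$ (infinite-order grouplike) yields the Liu family. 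Your proposed relation ``$x^n=\alpha+\beta g^n$'' in the Taft case, and your assignment of the Liu relation to the finite-order case, would both have to be reversed before the sketch could be turned into a proof.
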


In \cite[Question 7.1]{BZ10} we asked if, in characteristic 0, every 
Hopf algebra in Theorem \ref{xxthm1.9} satisfies either $im(H)=1$ or 
$im(H)=io(H)$. The remarkable construction and result of Wu, Liu and 
Ding \cite{WLD16} answers this question in the negative, and in doing 
so provides the complete classification of the Hopf algebras in 
Theorem \ref{xxthm1.9} in the characteristic 0 case. The new class of 
Hopf algebras $D(m,d,\xi)$ is defined in \cite[Subsection 4.1]{WLD16}, 
requiring a couple of pages, and the proof that $D(m,d,\xi)$ is a Hopf 
algebra occupies 10 pages of \cite[Subsection 4.2]{WLD16}. Basic 
properties of $D(m,d,\xi)$ are described in \cite[Subsection 4.3]{WLD16}; 
in particular $io(D(m,d,\xi))=2m$ and $\im(D(m,d,\xi))=m$ 
\cite[Lemma 4.5]{WLD16}.

\begin{theorem} \cite[Theorem 8.3]{WLD16} 
\label{xxthm1.12} 
Asume that $\Bbbk$ has characteristic 0, and let $H$ be an affine 
prime AS-regular Hopf algebra of GK-dimension one. Suppose 
$io(H)\neq im(H)\neq 1$. Then $H$ is isomorphic to $D(m,d,\xi)$ for 
some choice of $m,d,\xi$.
\end{theorem}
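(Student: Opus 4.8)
The plan is to run the same integral-invariant strategy that yielded Theorems \ref{xxthm1.10} and \ref{xxthm1.11}, but now in the remaining regime $1 \neq \im(H) \neq \io(H)$, and to match the resulting structure against the algebras $D(m,d,\xi)$. First I would set $n := \io(H)$, which is finite and equals the PI-degree of $H$ by Theorem \ref{xxthm1.9}, and $m := \im(H)$, so that $m \mid n$, $1 < m < n$. The first real step is to pin down the group $G$ of grouplike elements of $H$ and the subalgebra $\Bbbk G$ it generates. As in \cite{BZ10}, the left winding automorphism $\Xi^l_{\int^l}$ associated to the integral has order $n$, and one analyses its action on the affine center $Z(H) = \Bbbk[z]$ (a polynomial ring in one variable, since $H$ is a finite module over its center and is a prime PI ring of GK-dimension one). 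The winding automorphism acts on $Z(H)$ by a root of unity, and controlling this action together with the relation between $G^l_{\int^l}$ and $G^r_{\int^l}$ — whose index is exactly $m$ — forces $G$ to be a specific finite or infinite group and constrains the skew-commutation relations among the algebra generators.

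Next I would extract generators and relations. The standard picture for these GK-dimension-one algebras is that $H$ is generated by a grouplike-type element (or a pair of them) and a skew-primitive element $x$ with $\Delta(x) = x \otimes 1 + g \otimes x$ for a suitable grouplike $g$, subject to a skew-commutation $xg' = \xi' g' x$ and a polynomial relation expressing a power of $x$ in terms of central/grouplike data; the parameters $(m,d,\xi)$ of \cite[Subsection 4.1]{WLD16} record precisely the order of the relevant root of unity, the exponent in the $x$-power relation, and the mixing between left and right winding actions. Having $\io(H) = 2m$ and $\im(H) = m$ as computed in \cite[Lemma 4.5]{WLD16} gives the numerical fingerprint: I would show that $n = 2m$ is forced, so that the hypothesis $1 \neq m \neq n$ together with $m \mid n$ and the parity constraint coming from the structure of $G$ leaves exactly the family $D(m,d,\xi)$, with $d$ determined by the winding-automorphism eigenvalue and $\xi$ by the commutation relation. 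The final step is to exhibit a Hopf algebra isomorphism $H \xrightarrow{\sim} D(m,d,\xi)$ by matching generators, comultiplications, antipodes and defining relations, invoking uniqueness of such presentations.

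The main obstacle, as in \cite{WLD16}, is the combinatorial bookkeeping in the middle step: showing that the allowable data for $(G, g, x, \xi', d)$ is exhausted by the $D(m,d,\xi)$ presentation rather than some a priori larger list, and in particular ruling out degenerate or isomorphic-to-already-classified cases. This is exactly where \cite{WLD16} spends its length — the ten-page verification that $D(m,d,\xi)$ is a Hopf algebra is the flip side of showing these are the only possibilities — so I would expect the argument to require a careful case division on the structure of $G$ (torsion-free abelian of rank one versus finite cyclic versus a non-abelian extension) and on the behaviour of $S^2$, which is an inner-by-winding automorphism whose order interacts with both $\io(H)$ and $\im(H)$. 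Everything else — the finiteness of $\io(H)$, the Cohen–Macaulay and AS-regularity consequences, the reduction to $Z(H) = \Bbbk[z]$ — is supplied by Theorems \ref{xxthm1.9} and the results recalled in Section \ref{xxsec1.1}.
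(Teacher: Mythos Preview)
The survey paper does not itself prove this theorem; it simply cites \cite[Theorem 8.3]{WLD16}. So there is no proof in the paper to compare against. Nevertheless, your outline has a genuine structural gap worth naming.

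Your second paragraph invokes the ``standard picture'' in which $H$ is generated by grouplike elements together with a skew-primitive $x$ satisfying $\Delta(x) = x\otimes 1 + g\otimes x$. That template is exactly what underlies the classifications in Theorems \ref{xxthm1.10} and \ref{xxthm1.11}, all of whose algebras are pointed. But the defining feature of the $D(m,d,\xi)$ family is that it is \emph{not} pointed --- this is \cite[Proposition 4.9]{WLD16}, recorded immediately after Theorem \ref{xxthm1.12} in the survey. Hence $D(m,d,\xi)$ cannot be generated by grouplikes and skew-primitives in the way you describe; as one sees from the sketch in the proof of Proposition \ref{xxpro1.17}(1), the generators include elements $u_0,\ldots,u_{m-1}$ with $xu_i = u_ix^{-1}$, whose comultiplication is far more intricate than a skew-primitive formula (this is why \cite{WLD16} needs ten pages merely to verify the Hopf axioms). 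So the step ``extract generators and relations, then match against the $(m,d,\xi)$ presentation'' cannot run on the pointed template you propose.

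More conceptually: the integral-invariant/coradical method that proves Theorems \ref{xxthm1.10} and \ref{xxthm1.11} is precisely the method whose limits are exposed here --- the existence of $D(m,d,\xi)$ answered \cite[Question 7.1]{BZ10} in the negative. A viable argument must allow a nontrivial (matrix-coalgebra) coradical from the start and recover the $u_i$ as entries of such a matrix, not as skew primitives. Separately, your claim that ``$n = 2m$ is forced'' is asserted without mechanism; this is not a formal divisibility consequence of $m\mid n$ and $1<m<n$, but one of the substantive structural outcomes of the analysis in \cite{WLD16}, and your sketch gives no indication of where that parity constraint would come from.
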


From Theorems \ref{xxthm1.9}, \ref{xxthm1.10}, \ref{xxthm1.11} and 
\ref{xxthm1.12} one sees that, when $\Bbbk$ has characteristic 0 and 
the Hopf $\Bbbk$-algebra $H$ is prime affine AS-regular of GK-dimension 
one, 
$${\text{$io(H)/im(H)$ is either 1, 2 or  $io(H)$.}}$$ 
By \cite[Proposition 4.9]{WLD16}, $D(m,d,\xi)$ is not pointed, giving a 
negative answer to \cite[Question 2.16]{Go13} and \cite[Section 0.5]{BZ10}. 

\subsection{Further questions and comments on GK-dimension one}
\label{xxsec1.4}

After the progress described in Subsection \ref{xxsec1.3} it is 
natural to ask if one can achieve a classification extending that 
of Theorems \ref{xxthm1.10}-\ref{xxthm1.12} by removing one or 
more of the hypotheses that $(A)$ $\Bbbk$ has characteristic 0, 
$(B)$ $H$ is AS-regular or $(C)$ $H$ is prime. We briefly consider 
each of these in turn.

\bigskip
\noindent 
$(A)$ This may be the easiest of the three, but it has received 
minimal attention so far. For the sake of precision, we formulate 
a specific question:

\begin{question}
\label{xxque1.13} 
Let $\Bbbk$ have characteristic $p > 0$. Classify the prime affine 
AS-regular Hopf $\Bbbk$-algebras $H$ of GK-dimension 1 when 
(i) $p \nmid io(H)$ and (ii) $p | io(H)$.
\end{question}

\noindent 
It may be that - at least in case (i) - this requires nothing 
more than a careful check of the arguments used for the results 
of Subsection \ref{xxsec1.3}. Along similar lines, one should also 
consider what happens when $\Bbbk$ is not algebraically closed. 
Even in the commutative case, new examples occur, as noted in 
\cite[Example 8.3]{LWZ07}.
\bigskip

\noindent 
$(B)$ Consider now the AS-regularity hypothesis. it was known 
already from \cite[Example 7.3]{BZ10} that even in characteristic 
0 there exist prime affine Hopf algebras of GK-dimension 1 which 
are not regular. Liu \cite{Li20} started a program to classify 
all prime affine non-regular Hopf algebras $H$ of GK-dimension 
one when $\Bbbk$ has characteristic 0. By \cite{WZ03}, despite 
the absence of regularity, such an algebra $H$ is still 
AS-Gorenstein, so homological integrals are defined and homological 
invariants such as the integral order and integral minor can 
continue to be used. Liu's strategy is to impose two hypotheses 
on $H$ which are known \emph{consequences} of regularity. For 
example, one of the hypotheses is to assume the existence of a 
nontrivial one-dimensional H-module $M$, specifically a module 
$M$ such that $o(M) = \mathrm{PIdeg}H$, where $o(M)$ is as 
defined in Subsection \ref{xxsec1.2}. Assuming these two extra 
hypotheses Liu is able to obtain a beautiful classification in 
which many new Hopf algebras occur, these being what he calls 
\emph{fractional} versions of some of the regular algebras. This 
leaves an obvious project:

\begin{question}
\label{xxque1.14} 
Classify prime affine Hopf $\Bbbk$-algebras of GK-dimension 1 in 
characteristic 0 without assuming Liu's two supplementary 
hypotheses.
\end{question}

\noindent $(C)$ 
What happens when the primeness hypothesis is omitted? Recall 
that an affine algebra of GK-dimension one satisfies a polynomial 
identity (PI), thanks to \cite{SSW85}. By \cite[Lemma 5.3(b)]{LWZ07}, 
every noetherian affine AS-regular Hopf algebra satisfying a PI 
is a direct sum of prime rings. In view of this and considering 
known examples, Lu, Wu and Zhang propose the following conjectural 
structure of a noetherian affine AS-regular Hopf $\Bbbk$-algebra 
$H$ of GK-dimension one. Such an algebra $H$ should fit into a 
short exact sequence
\begin{equation}
\label{E1.14.1}\tag{E1.14.1}
0\to H_{dis} \to H\to H_{conn}\to 0
\end{equation}
where the {\it connected component}  $H_{conn}$ is an AS-regular 
prime factor Hopf algebra of $H$ of GK-dimension 1, and the 
{\it discrete component} $H_{dis}$ should be a finite dimensional 
subalgebra of $H$ with a braided Hopf algebra structure. In 
\cite[Theorem 6.5(b)]{LWZ07}, Lu, Wu and Zhang were able to prove 
much of this, but \emph{not} the finite dimensionality of $H_{dis}$. 
We therefore ask:

\begin{question}
\label{xxque1.15} Is the above conjecture correct?
\end{question}

\noindent The answer to this question might well be important 
for the structure of AS-regular noetherian Hopf algebras which 
are \emph{not} of GK-dimension 1, nor even PI.

In parallel with trying to answer Question \ref{xxque1.15}, one 
should try to construct all possible algebras provided by the 
recipe of Lu, Wu and Zhang. This philosophy motivates the following 
question.

\begin{question}[Gongxiang Liu \cite{Li20a}]
\label{xxque1.16}  
Let $\Bbbk$ have characteristic 0 and let $H$ be an affine 
prime AS-regular Hopf algebra of GK-dimension 1, as listed
in Lemma \ref{xxlem1.1} and Theorems \ref{xxthm1.10}, 
\ref{xxthm1.11} and \ref{xxthm1.12}. 
Classify finite dimensional semisimple braided Hopf algebras 
in the Yetter-Drinfeld category $^{H}_{H} {\mathcal {YD}}$.
These objects should play the role of $H_{dis}$ in 
\eqref{E1.14.1}.
\end{question}

\subsection{The bridge from GK-dimension 1 to GK-dimension 2}
\label{xxsec1.5} 
As a step towards looking at prime affine Hopf algebras of 
GK-dimension 2 in the next section, we first note that, when 
$\Bbbk$ has characteristic 0, many such algebras can be 
constructed as extensions of an (in general braided) Hopf 
algebra of GK-dimension 1 by a second such algebra. This is 
true in the commutative case, as is recalled in Subsection 
\ref{xxsec2.1} below, and is also true in the cocommutative 
case, as can be deduced from the structure theorem of 
Cartier-Kostant-Gabri$\grave{\mathrm{e}}$l \cite[5.6.4, 5.6.5]{Mo93}. 
Many more examples can be constructed from the algebras 
featuring in Subsection \ref{xxsec1.3} by use of the tensor 
product, as in part (3) of the following result.

\begin{proposition}
\label{xxpro1.17} 
Let $\Bbbk$ be an algebraically closed field of 
characteristic 0, and let $H$ and $K$ be prime affine 
AS-regular Hopf $\Bbbk$-algebras of GK-dimension one. 
Let $n$ and $m$ be the integral orders of $H$ and $K$ 
respectively.
\begin{enumerate}
\item[(1)]
The center $Z(H)$ of $H$ is isomorphic to either $\Bbbk[t]$ 
or $\Bbbk[t^{\pm 1}]$.
\item[(2)]
The Goldie quotient ring of $H$ is isomorphic to $M_n(\Bbbk(t))$.
\item[(3)] 
The tensor product $H\otimes K$ is a noetherian prime affine 
AS-regular Hopf algebra of GK-dimension 2. Moreover $H \otimes K$ 
is a free module of rank $nm$ over its center $Z(H) \otimes Z(K)$.
\end{enumerate}
\end{proposition}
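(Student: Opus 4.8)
The plan is to establish the three parts essentially in order, drawing on the classification results quoted above, since each of the algebras appearing in Lemma~\ref{xxlem1.1} and Theorems~\ref{xxthm1.10}--\ref{xxthm1.12} has an explicitly known center, and on the general machinery of noetherian PI Hopf algebras. For part~(1), I would argue as follows. By Theorem~\ref{xxthm1.9}, $H$ is noetherian PI (indeed a finite module over its affine center, by the Small--Warfield theorem quoted in the opening of Section~\ref{xxsec1}), and $io(H)$ equals the PI-degree of $H$. The center $Z:=Z(H)$ is then an affine commutative domain over which $H$ is a finite module, so $\GKdim Z = \GKdim H = 1$; thus $Z$ is the coordinate ring of an affine curve. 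It suffices to see that this curve is smooth, equivalently that $Z$ is a Dedekind domain, equivalently (since $\GKdim Z=1$) that $Z$ is a PID or at least regular in codimension one; because $H$ is AS-regular, hence has finite global dimension, and $H$ is a finitely generated $Z$-module which is projective over $Z$ on a dense open set, the fibres of $\Spec H\to\Spec Z$ have constant dimension, and a standard argument (Auslander--Buchsbaum, or the fact that a finite extension domain of a one-dimensional local ring that is itself of finite global dimension forces regularity of the base) shows $Z$ is regular. A smooth affine curve whose coordinate ring is a Hopf-module-finite extension is either $\mathbb{A}^1$ or $\mathbb{G}_m$ up to the finitely many possibilities; cleaner still, one checks it case-by-case against the explicit list — for $\Bbbk[X]$, $\Bbbk[X^{\pm1}]$ the center is the algebra itself; for the Taft algebras $H(n,t,\xi)$ and Liu algebras the center is $\Bbbk[x^n]$ or $\Bbbk[x^{\pm n}]$; for $\Bbbk\mathbb{D}$ one computes $Z(\Bbbk\mathbb{D})=\Bbbk[x+x^{-1}]\cong\Bbbk[t]$; and for $D(m,d,\xi)$ one cites the explicit center computed in \cite{WLD16}. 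In every case $Z\cong\Bbbk[t]$ or $\Bbbk[t^{\pm1}]$.

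For part~(2), let $Q(H)$ denote the Goldie (simple artinian) quotient ring of the prime noetherian PI ring $H$. By Posner's theorem, $Q(H)$ is a central simple algebra over the field of fractions $Q(Z)=\Bbbk(t)$, of degree equal to the PI-degree of $H$, which by Theorem~\ref{xxthm1.9} is $io(H)=n$; so $Q(H)\cong M_r(\Delta)$ for a division algebra $\Delta$ with center $\Bbbk(t)$ and $[\Delta:\Bbbk(t)]=s^2$, $rs=n$. Now $\Bbbk(t)$ is a $C_1$-field (Tsen's theorem, already invoked in Remark~\ref{xxrem1.2} and in the setup for Lemma~\ref{xxlem1.1}), so its Brauer group is trivial, forcing $\Delta=\Bbbk(t)$ and hence $Q(H)\cong M_n(\Bbbk(t))$.

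For part~(3), the outer-tensor-product Hopf structure on $H\otimes K$ is standard, and $\GKdim(H\otimes K)=\GKdim H+\GKdim K=2$ by the product formula for GK-dimension in \cite{KL00}; noetherianity and affineness are inherited by tensor products of module-finite affine algebras over affine centers. For primeness: $H$ and $K$ are module-finite over their centers with $Q(H)\cong M_n(\Bbbk(t))$, $Q(K)\cong M_m(\Bbbk(s))$ by parts (1)--(2), so $Q(H)\otimes Q(K)\cong M_{nm}(\Bbbk(t)\otimes\Bbbk(s))$, and since $\Bbbk(t)\otimes_{\Bbbk}\Bbbk(s)$ is a domain (both are purely transcendental, $\Bbbk$ algebraically closed) localizing at its fraction field shows $H\otimes K$ has simple artinian quotient ring $M_{nm}(\Bbbk(t,s))$, in particular $H\otimes K$ is prime. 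AS-regularity of $H\otimes K$ follows because AS-regularity is preserved under tensor product over $\Bbbk$: $\injdim$ and $\gldim$ are additive for tensor products of the relevant noetherian algebras, the trivial module $\Bbbk_{H\otimes K}=\Bbbk_H\otimes\Bbbk_K$, and $\Ext^\bullet_{H\otimes K}(\Bbbk,H\otimes K)\cong\Ext^\bullet_H(\Bbbk,H)\otimes\Ext^\bullet_K(\Bbbk,K)$ by a Künneth argument, so the one-dimensionality and concentration in a single degree required by (AS1)--(AS3) are inherited, with $d_{H\otimes K}=d_H+d_K=1+1=2$. Finally $Z(H\otimes K)=Z(H)\otimes Z(K)$ (true for tensor products over a field with one factor module-finite over its affine center), and since $H$ is a free $Z(H)$-module of rank $n$ — here I use that $H$ is a projective $Z(H)$-module by AS-regularity together with $Z(H)$ being a PID or Laurent PID, hence free, and the rank is $n=\mathrm{PIdeg}\,H$ as read off from $Q(H)=M_n(\Bbbk(t))$ — and similarly $K$ is free of rank $m$ over $Z(K)$, the tensor product $H\otimes K$ is free of rank $nm$ over $Z(H)\otimes Z(K)$.

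The main obstacle I anticipate is part~(1): specifically, the claim that $Z(H)$ is \emph{regular} (smooth curve) rather than merely one-dimensional — equivalently, ruling out nodal or cuspidal centers. One clean route is to invoke the case-by-case structure from Theorems~\ref{xxthm1.10}--\ref{xxthm1.12} and Lemma~\ref{xxlem1.1} and simply read off each center, which sidesteps the general regularity question entirely; the conceptual route via "a finite module-algebra of finite global dimension over a local domain forces the domain to be regular" requires a little care (it is essentially Auslander--Buchsbaum plus the fact that a module-finite extension cannot raise global dimension only under conditions like the extension being projective), so I would likely present the case-by-case verification as the primary argument and remark on the structural explanation.
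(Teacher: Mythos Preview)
Your outline tracks the paper's proof closely. Parts (1) and (2) are handled the same way: case-by-case through the classification, then Posner plus Tsen. One caveat on (1): the center of $D(m,d,\xi)$ is not recorded in \cite{WLD16}; the paper computes it itself as $\Bbbk[x+x^{-1}]$, using the bigrading \cite[(4.7)]{WLD16} to see $Z(D)\subseteq D_{00}=\Bbbk[x^{\pm 1}]$ and then the relation $xu_i=u_ix^{-1}$ to cut down to $\Bbbk[x+x^{-1}]$, so you will need to supply this rather than cite it. For part (3), the primeness and AS-regularity arguments coincide (K\"unneth, together with the identification $Q(H\otimes K)\cong M_{nm}(\Bbbk(s,t))$). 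Your assertion $Z(H\otimes K)=Z(H)\otimes Z(K)$ is correct and in fact more direct than the paper's route, which sandwiches $Z(H\otimes K)$ between $Z(H)\otimes Z(K)$ and its fraction field $\Bbbk(s,t)$ and then appeals to integral closedness of the (Laurent) polynomial ring. The one genuine divergence is freeness: the paper deduces freeness of $H\otimes K$ over its center in one stroke from the GK-Cohen--Macaulay property \cite{WZ03} and \cite[Theorem~3.7(ii)]{BM17}, whereas you argue factor-by-factor that $H$ is free over the PID $Z(H)$. Your route is more elementary and does work, but two points need repair: ``projective by AS-regularity'' is not the operative reason --- rather, $H$ is \emph{torsion-free} over $Z(H)$ because nonzero central elements of a prime ring are regular, and a finitely generated torsion-free module over a PID is free; and the rank you read off from $Q(H)\cong M_n(\Bbbk(t))$ is $\dim_{\Bbbk(t)}M_n(\Bbbk(t))=n^2$, not $n$.
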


\begin{proof}
(1) This follows from the classification in Subsection \ref{xxsec1.3}. 
One can check the assertion directly for the algebras in Theorems 
\ref{xxthm1.9}, \ref{xxthm1.10} and \ref{xxthm1.11}(1). For the 
generalized Liu algebra $B(n,w,\xi)$ in Theorem \ref{xxthm1.11}(2), 
the assertion follows from \cite[Theorem 3.4(b)]{BZ10}. For the 
algebras $D := D(m,d,\xi)$ in Theorem \ref{xxthm1.12}, we claim 
that $Z(D) = \Bbbk[t]$ where $t=x+x^{-1}$. We outline the argument, 
freely using the notation introduced in \cite{WLD16}. Indeed, the 
algebra $D$ has a bigrading $D=\bigoplus D_{ij}$ with details given 
in \cite[equation (4.7)]{WLD16}. From this bigrading it is not hard 
to see that $Z(D) \subseteq D_{00}$, where $D_{00}$ is exactly the 
Hopf subalgebra $\Bbbk[x^{\pm 1}]$. Since $xu_i=u_ix^{-1}$ for 
$0\leq i\leq m-1$ by the definition of $D(m,d,\xi)$, the center is 
contained in the subalgebra $\Bbbk[x+x^{-1}]$. Conversely, it is 
easy to check that $x+x^{-1}$ commutes with all generators of $D$.

(2) This follows from part (1), by \cite{SW84} and Tsen's theorem 
\cite[p. 374]{Co77}.

(3) First, $H$ and $K$ are finite over their affine centers. 
By the K{\" u}nneth formula, we obtain that $H\otimes K$ has global
dimension at most 2 (a similar argument was used in the proof of 
\cite[Theorem 3.2]{LWZ09}). As a consequence, $H\otimes K$ is a 
noetherian affine PI AS-regular Hopf algebra of GK-dimension two. 
By \cite[Theorem A]{BG97}, $H\otimes K$ is a finite direct sum of 
prime rings.
 
We claim that $H\otimes K$ is prime. By (2), $Q(H)\cong M_n(\Bbbk(t))$ 
and $Q(K)\cong M_m(\Bbbk(s))$. Then 
$Q(H)\otimes Q(K)\cong M_{nm}(\Bbbk(s)\otimes \Bbbk(t))$, and hence
\begin{equation}
\label{E1.17.1}\tag{E1.17.1}
Q(H\otimes K) = Q(Q(H) \otimes Q(K)) \cong M_{nm}(\Bbbk(s,t)).
\end{equation} 
This forces $H\otimes K$ to be prime by Goldie's theorem. Next, 
using \eqref{E1.17.1},
\begin{equation} 
\label{E1.17.2}\tag{E1.17.2}
Z(H) \otimes Z(K) \subseteq Z(H \otimes K) \subseteq 
Z(Q(H\otimes K))  = \Bbbk (s,t) =Q(Z(H) \otimes Z(K)).
\end{equation}
Note that $Z(H \otimes K)$ is a finite $Z(H) \otimes Z(K)$-module 
and thus integral over $Z(H) \otimes Z(K)$. Since  the latter 
algebra is integrally closed we deduce from \eqref{E1.17.2} that 
$Z(H \otimes K) = Z(H) \otimes Z(K)$. Finally, the freeness of 
$H \otimes K$ over this (possibly partly Laurent) polynomial 
subalgebra follows from the Cohen-Macaulay property 
\cite[Theorems 0.1 and 0.2]{WZ03} and \cite[Theorem 3.7(ii)]{BM17}, 
and the rank from \eqref{E1.17.1} and \eqref{E1.17.2}.
\end{proof}

The following question, though at first glance rather 
technical, is suggested for the algebras $D(m,d,\xi)$ by 
considering the pattern followed by the other GK-dimension 
one families, and is highly relevant to the classification 
program for GK-dimension two Hopf domains discussed in 
Section 2.

\begin{question}
\label{xxque1.18}
Let $D(m,d,\xi)$ be the Hopf algebra in Theorem \ref{xxthm1.12}. 
Is there a noetherian affine Hopf domain $H$ of GK-dimension two 
containing either a grouplike element $g$ or a skew-primitive 
element $x$ with $H/\langle g-1 \rangle \cong D(m,d,\xi)$ or 
$H/\langle x \rangle \cong D(m,d,\xi)$ as Hopf algebras?
\end{question}

\noindent If such a Hopf algebra exists, it could \emph{not} 
be pointed (since $D(m,d,\xi)$ is not) and so would demonstrate 
the existence of non-pointed affine Hopf domains in dimension 2. At the 
moment, the project to classify noetherian affine Hopf 
domains of GK-dimension two only produces pointed Hopf algebras.

Note that the requirement that $H$ be a domain is key in 
Question \ref{xxque1.18}, since $H= D(m,d,\xi)[t]$ with $t$ 
primitive or $H= D(m,d,\xi)[t^{\pm 1}]$ with $t$ grouplike 
give easy prime examples as special cases of Proposition 
\ref{xxpro1.17}.

\section{Hopf algebras of GK-dimension 2}
\label{xxsec2}

In Section 2 we survey some results concerning Hopf 
algebra domains of GK-dimension two. We continue to 
assume that $\Bbbk$ is algebraically closed, 
and - except in Subsections \ref{xxsec2.3} and 
\ref{xxsec2.6} - we assume that $\Bbbk$ has characteristic 0.

\subsection{Commutative examples and dimension of tangent 
spaces}
\label{xxsec2.1}
The tensor product recipe of Proposition \ref{xxpro1.17} 
applied to the commutative Hopf algebras of Lemma 
\ref{xxlem1.1} yields the following three commutative Hopf 
domains of GK-dimension two:
\begin{equation}
\label{E2.1.0}\tag{E2.1.0}
{\mathcal O}(\Bbbk)^{\otimes 2},
{\mathcal O}(\Bbbk^{\times})^{\otimes 2},
{\mathcal O}(\Bbbk)\otimes 
{\mathcal O}(\Bbbk^{\times}).
\end{equation}

\noindent The remaining affine commutative Hopf 
$\Bbbk$-algebra domains of GK-dimension two arise as follows. 

\begin{example} \cite[Construction 1.1]{GZ10}
\label{xxex2.1} 
Let $n$ be a positive integer. Let $A(n,1)$ be the algebra 
generated by $x, x^{-1}, y$ subject to the relation $xy=yx$. 
There is a unique Hopf algebra structure on $A$ with 
comultiplication
$$\Delta(x)=x\otimes x, \quad \Delta(y)=y\otimes 1+x^n \otimes y.$$
\end{example}

\noindent Expressed in alternative language, examples 
\eqref{E2.1.0} and \ref{xxex2.1} give a list (which is of 
course well-known) of all connected two-dimensional affine 
algebraic groups over $\Bbbk$, as noted in \cite[Lemma 2.2]{GZ10}.

\medskip

In trying to classify all the affine Hopf domains of 
GK-dimension two it turns out to be both useful and natural 
to first impose an additional hypothesis. Namely we consider 
first such Hopf algebras $H$ for which 
\begin{equation}
\label{E2.1.1}\tag{E2.1.1}
\Ext^1_H(\Bbbk,\Bbbk)\neq 0.
\end{equation}
Recall that if $A$ is \emph{any} affine commutative 
$\Bbbk$-algebra of GK-dimension two and $\fm$ is a maximal 
ideal of $A$, then 
$$\dim_{\Bbbk} \fm/\fm^2=\dim_{\Bbbk} 
\Ext^1_{A}(A/\fm, A/\fm)\geq 2.$$
Indeed $\dim_{\Bbbk} \Ext^1_A(A/\fm,A/\fm)$ is equal to the 
dimension of the tangent space at the point $\fm$ in
${\text{Spec}}\; A$, so it seems reasonable to consider the 
class of quantum groups $H$ for which the tangent space at 
any point of $H$ is nontrivial. In fact, as we record in 
the next lemma, \eqref{E2.1.1} is equivalent to the condition 
that the corresponding quantum group contains a classical 
algebraic group of dimension 1; one might compare this with 
the analogous fact that every quantum projective plane in 
the sense of Artin and Schelter contains a classical 
commutative curve of dimension 1. 

\begin{lemma} \cite[Theorem 3.9]{GZ10}
\label{xxlem2.2}
Assume that $\Bbbk$ has characteristic 0 and let $H$ be a 
Hopf $\Bbbk$-algebra of GK-dimension at most two which is 
either affine or noetherian. Then $H$ satisfies 
\eqref{E2.1.1} if and only if $H$ has a Hopf quotient, 
denoted by $\overline{H}$, isomorphic to either 
$\Bbbk[t^{\pm 1}]$ with $t$ grouplike or $\Bbbk[t]$ with 
$t$ primitive.
\end{lemma}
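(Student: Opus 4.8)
The plan is to prove the implication $(\Leftarrow)$ first, as it is the routine direction, and then tackle $(\Rightarrow)$, which is the substance. For $(\Leftarrow)$: if $H$ surjects onto $\overline{H} = \Bbbk[t]$ with $t$ primitive, or $\overline{H} = \Bbbk[t^{\pm 1}]$ with $t$ grouplike, then pulling back the trivial module along the Hopf quotient map and using that $\Ext^1_{\overline{H}}(\Bbbk,\Bbbk) \neq 0$ for both of these (the additive group contributes the primitive direction, the multiplicative group the grouplike direction), together with the fact that a Hopf surjection $H \twoheadrightarrow \overline{H}$ induces an injection $\Ext^1_{\overline{H}}(\Bbbk,\Bbbk) \hookrightarrow \Ext^1_H(\Bbbk,\Bbbk)$ (restriction along a surjection of augmented algebras, or equivalently the five-term exact sequence of the "Hopf subalgebra" situation), gives \eqref{E2.1.1}. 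One should be slightly careful that the relevant inflation map is injective; this follows because $\overline{H}$ is a quotient Hopf algebra and $H$ is faithfully flat over the corresponding normal Hopf subalgebra in the noetherian/affine setting, but in fact a cleaner route is just to exhibit a nonsplit extension of $\Bbbk$ by $\Bbbk$ over $\overline{H}$ explicitly and inflate it.

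For $(\Rightarrow)$, the idea is standard Hopf-algebraic deformation theory. A nonzero class in $\Ext^1_H(\Bbbk,\Bbbk)$ corresponds to a two-dimensional $H$-module $M$ which is a nonsplit self-extension of the trivial module; equivalently, letting $\fm = \ker\epsilon$, we have $\fm/\fm^2 \neq 0$, so $\operatorname{Hom}_\Bbbk(\fm/\fm^2, \Bbbk) = \Ext^1_H(\Bbbk,\Bbbk) \neq 0$. Pick a nonzero functional and let $V \subseteq \fm/\fm^2$ be a one-dimensional subspace it is nonzero on; the coalgebra structure of $H$ restricts to make $\fm/\fm^2$ a "coabelian" object, and one wants to produce from $V$ a one-dimensional sub-coalgebra-like piece that exponentiates to a Hopf quotient of GK-dimension one. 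Concretely: the counit and comultiplication endow the cotangent space with the structure of a right module over the "character group", and in characteristic $0$ with $\GKdim H \leq 2$ one argues that the Hopf subalgebra (or quotient) generated by lifting $V$ is a commutative Hopf domain of GK-dimension one — hence, by Lemma \ref{xxlem1.1}, it is $\Bbbk[t]$ or $\Bbbk[t^{\pm 1}]$ — and then that $H$ in fact \emph{surjects} onto it. The passage from "contains" to "is a quotient" uses that a one-dimensional algebraic group is its own "abelianization at the relevant prime" and, more honestly, invokes the normality/faithful-flatness machinery of \cite{GZ10}: the key point is that the skew-primitive (or grouplike) element detecting the nonzero $\Ext^1$ class generates a normal Hopf subalgebra whose quotient has strictly smaller GK-dimension, and one must rule out that this quotient is all of $H$ or is finite-dimensional using $\GKdim H \leq 2$ and the domain/noetherian hypothesis.

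The main obstacle is precisely this last step: upgrading the existence of a one-dimensional "classical subgroup" inside the quantum group to the existence of a genuine Hopf-algebra \emph{quotient} of the required shape. The naive lift of $V \subseteq \fm/\fm^2$ to an element $x \in \fm$ need not be skew-primitive on the nose — one only knows $\Delta(x) - x\otimes 1 - 1\otimes x \in \fm\otimes\fm$ — so one has to either correct $x$ within its coset or pass to an associated graded/filtered argument to extract the honest skew-primitive or grouplike element. In characteristic $0$ this correction is possible because the obstruction lives in a space controlled by the coalgebra structure and can be killed using the (co)Hochschild coboundary; the argument of \cite[Theorem 3.9]{GZ10} presumably does exactly this bookkeeping, distinguishing the two cases according to whether the group-like character $g = \Xi$ associated to $x$ is trivial (yielding the additive quotient $\Bbbk[t]$, $t$ primitive) or nontrivial (yielding, after checking it has infinite order via the GK-dimension bound, the multiplicative quotient $\Bbbk[t^{\pm 1}]$, $t$ grouplike). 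I would lean on that reference for the delicate part and present the skeleton above as the conceptual route.
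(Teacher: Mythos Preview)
The paper gives no proof of this lemma: it is stated with the citation \cite[Theorem 3.9]{GZ10} and nothing more. So there is no in-paper argument to compare against --- both you and the authors defer the substance to Goodearl--Zhang.

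On the sketch itself: the $(\Leftarrow)$ direction is fine, and the cleaner route you mention (inflate an explicit nonsplit self-extension of $\Bbbk$) works immediately, since an $H$-linear splitting of an inflated $\overline{H}$-extension is automatically $\overline{H}$-linear.

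Your $(\Rightarrow)$ outline, however, conflates Hopf subalgebras and Hopf quotients in a way that would not survive a careful write-up. You propose lifting a line $V \subseteq \fm/\fm^2$ to a (skew-)primitive element of $H$, arguing that the Hopf subalgebra it generates is one of the two commutative GK-one domains of Lemma~\ref{xxlem1.1}, and \emph{then} that $H$ surjects onto it. But a subalgebra is not a quotient, and there is no general mechanism for promoting ``$H$ contains $\Bbbk[t]$ as a Hopf subalgebra'' to ``$H$ surjects onto $\Bbbk[t]$ as a Hopf algebra''; the normality/faithful-flatness machinery you allude to runs in the opposite direction (from a normal Hopf subalgebra to the quotient \emph{by} it, not \emph{onto} it). A more natural route is dual: a nonzero class in $\Ext^1_H(\Bbbk,\Bbbk)$ is exactly a nonzero primitive element of the finite dual $H^{\circ}$, and one works with the Hopf subalgebra of $H^{\circ}$ it generates to produce, by orthogonality, a Hopf ideal of $H$ and hence a commutative Hopf quotient, which is then controlled via the GK-bound. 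Since you explicitly defer the delicate part to \cite{GZ10} anyway, your proposal and the paper's treatment end up amounting to the same thing in practice: a citation.
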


\subsection{Noetherian Hopf domains of GK-dimension 2}
\label{xxsec2.2}
Following Lemma \ref{xxlem2.2} the classification of Hopf 
$\Bbbk$-algebras of GK-dimension 2 satisfying \eqref{E2.1.1} 
now splits into two cases: (i) $\overline{H}=\Bbbk[t^{\pm 1}]$
or (ii) $\overline{H}=\Bbbk[t]$. The following result is due 
to Goodearl and Zhang \cite{GZ10}. In it, families (I), (III) 
and (IV) constitute case (i), (II) and (V) case (ii).

\begin{theorem} \cite[Theorem 0.1]{GZ10}
\label{xxthm2.3} 
Let $\Bbbk$ be algebraically closed of characteristic 0, 
and let $H$ be a Hopf $\Bbbk$-algebra domain of GK-dimension 2 
satisfying \eqref{E2.1.1}. Then $H$ is noetherian if and only 
if $H$ is affine if and only if H is isomorphic to one of 
the following:
\begin{enumerate}
\item[(I)]
The group algebra $\Bbbk \Gamma$, where $\Gamma$ is either
\begin{enumerate}
\item[(Ia)] 
the free abelian group ${\mathbb Z}^2$, or
\item[(Ib)] 
the nontrivial semidirect product ${\mathbb Z}\rtimes {\mathbb Z}$.
\end{enumerate}
\item[(II)] 
The enveloping algebra $U({\mathfrak g})$, where 
${\mathfrak g}$ is either
\begin{enumerate}
\item[(IIa)] 
the 2-dimensional abelian Lie algebra over $\Bbbk$, or
\item[(IIb)] 
the Lie algebra over $\Bbbk$ with basis $\{x, y\}$ and 
$[x, y]=y$.
\end{enumerate}
\item[(III)] 
The Hopf algebras $A(n,q)$ in Example \ref{xxex2.4} below.
\item[(IV)] 
The Hopf algebras $B(n, p_0, \cdots, p_s, q)$ in Example 
\ref{xxex2.5} below.
\item[(V)] 
The Hopf algebras $C(n)$ in Example \ref{xxex2.6} below.
\end{enumerate}
\end{theorem}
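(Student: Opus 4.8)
The plan is to use Lemma \ref{xxlem2.2} to turn the hypothesis \eqref{E2.1.1} into a concrete surjection of Hopf algebras, and then to combine this with the GK-dimension-one classification of Lemma \ref{xxlem1.1}. By Lemma \ref{xxlem2.2} there is a surjective Hopf algebra map $\pi\colon H\to\overline{H}$ with $\overline{H}=\Bbbk[t^{\pm1}]$ ($t$ grouplike) in case (i), or $\overline{H}=\Bbbk[t]$ ($t$ primitive) in case (ii). I would then realise $H$ as an extension of $\overline{H}$ by a Hopf subalgebra of GK-dimension one, identify the latter using Lemma \ref{xxlem1.1}, and reconstruct $H$ from the (few) possibilities for the extension data. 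The easy half of the stated equivalences is immediate: each algebra in families (I)--(V) is the group algebra of a finitely generated polycyclic group, the enveloping algebra of a two-dimensional (hence solvable) Lie algebra, or an iterated Ore extension of $\Bbbk[t]$ or $\Bbbk[t^{\pm1}]$; in each case one checks directly that it is a noetherian affine domain of GK-dimension two satisfying \eqref{E2.1.1} (for $U(\mathfrak g)$ one has $\Ext^1_H(\Bbbk,\Bbbk)\cong(\mathfrak g/[\mathfrak g,\mathfrak g])^{\ast}\neq 0$, and for the other families the nonvanishing is read off the presentation). So it remains to prove that a noetherian, or an affine, Hopf domain $H$ of GK-dimension two satisfying \eqref{E2.1.1} lies in the list, and it will be enough to treat these two hypotheses in parallel, since the argument also shows that a noetherian such $H$ is automatically affine.

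Next I would set $A:=H^{\mathrm{co}\,\overline{H}}=\{h\in H:(\id\otimes\pi)\Delta(h)=h\otimes 1\}$. Using that $\overline{H}$ is both commutative and cocommutative, one verifies that $A$ is a Hopf subalgebra of $H$ with $H/HA^{+}\cong\overline{H}$, where $A^{+}=A\cap\ker\epsilon$; one then shows that $H$ is faithfully flat over $A$ and is an $\overline{H}$-Galois extension of it. Since $\GKdim\overline{H}=1$, GK-dimension is additive along this extension, so $\GKdim A=1$, and $A$ is a domain, being a subalgebra of $H$. Moreover $H$ is generated as an algebra by $A$ together with a single element $u\in H$ lifting the algebra generator of $\overline{H}$, so $H$ is a localisation of an Ore extension $A[u;\tau,\delta]$ with $\tau\in\Aut(A)$ and $\delta$ a $\tau$-derivation; in particular $H$ is affine (resp.\ noetherian) if and only if $A$ is. Hence if $H$ is affine then $A$ is affine, and if $H$ is noetherian then faithful flatness forces $A$ to be noetherian; either way Lemma \ref{xxlem1.1} and Remark \ref{xxrem1.2} leave only $A\cong\Bbbk[x^{\pm1}]$ ($x$ grouplike) or $A\cong\Bbbk[x]$ ($x$ primitive), the non-cyclic rank-one group algebras being excluded precisely because they are neither affine nor noetherian. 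This also disposes of ``noetherian $\Rightarrow$ affine''.

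It then remains to reconstruct $H$ from the data $(A,\overline{H},\tau,\delta)$. Here I would return to the explicit presentation: $H$ is generated by $x^{\pm1}$ or $x$ together with $u$, subject to one relation expressing $ux$ (or $ux^{-1}$) through $u,x$ via $\tau$ and $\delta$, and $\Delta$ is determined by $\Delta(x)$ — which is forced, $x$ being grouplike or primitive in $A$ — together with $\Delta(u)$, which lies in $u\otimes 1+\Bbbk[x^{\pm1}]\otimes u+\cdots$ and is constrained by the requirement that $\pi(u)$ be grouplike (case (i)) or primitive (case (ii)). Imposing coassociativity, the counit and antipode axioms, and the condition that $H$ be a domain of GK-dimension exactly two, one finds that $\tau$ must be (up to roots of unity in the appropriate slot) a scalar automorphism $x\mapsto qx$ and that $\delta$ is pinned down to finitely many normal forms. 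Carrying this out for the four combinations of $A\in\{\Bbbk[x^{\pm1}],\Bbbk[x]\}$ and $\overline{H}\in\{\Bbbk[t^{\pm1}],\Bbbk[t]\}$ should yield, in case (i), the group algebras of $\mathbb{Z}^{2}$ and $\mathbb{Z}\rtimes\mathbb{Z}$ (family (I)) and the quantum families (III) and (IV), and, in case (ii), the enveloping algebras of the two two-dimensional Lie algebras (family (II)) and family (V).

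The hardest part will be this last reconstruction step: translating the abstract extension $A\to H\to\overline{H}$ into an explicit presentation and then solving the resulting system of constraints — Hopf compatibility of $\Delta(u)$, the finiteness conditions forcing $\tau$ and $\delta$ into normal form, and the domain and exact-GK-dimension requirements ruling out the infinitely many other Ore extensions that satisfy the purely coalgebraic conditions — is where essentially all of the genuine work lies, and it is lengthy and case-heavy. Two supporting points also need care: first, the faithful-flatness inputs (that $H$ is faithfully flat, and hence Galois, over $A$, and that this yields additivity of GK-dimension) must be set up correctly in this generality; and second, the verification that $A$ is a genuine Hopf subalgebra, rather than merely a coideal subalgebra, so that Lemma \ref{xxlem1.1} is applicable. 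By contrast the converse implication and the check that each algebra in the list satisfies \eqref{E2.1.1} are routine.
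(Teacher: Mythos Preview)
The paper is a survey and does not give a proof of this theorem; it simply records the statement and cites \cite[Theorem~0.1]{GZ10}. So there is no in-paper proof to compare against, and I assess your outline on its own terms and against what the list of algebras in the statement forces.

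Your overall architecture --- split into cases (i) and (ii) via Lemma~\ref{xxlem2.2}, pass to coinvariants, classify those in GK-dimension one, then reconstruct --- is the natural shape, and it matches the case division the survey records. But there is a genuine gap at the heart of the argument: your claim that $A=H^{\mathrm{co}\,\overline H}$ is a Hopf subalgebra of $H$ is false in general, even with $\overline H$ both commutative and cocommutative. Coinvariants under a Hopf quotient form a (left) coideal subalgebra; they are a sub-bialgebra only when the extension is normal, which is not automatic here. Family~(IV) is an explicit counterexample. For $H=B(n,p_0,\ldots,p_s,q)$ with the quotient $\pi\colon H\to\Bbbk[t^{\pm 1}]$, $x\mapsto t$, $y_i\mapsto 0$, the right $\overline H$-coaction is the $\mathbb Z$-grading with $\deg x=1$ and $\deg y_i=0$, so $H^{\mathrm{co}\,\overline H}=\Bbbk\langle y^{m_1},\ldots,y^{m_s}\rangle\subset\Bbbk[y]$, a numerical-semigroup ring which is \emph{not} isomorphic to $\Bbbk[y]$ or $\Bbbk[y^{\pm 1}]$; and $\Delta(y_i)=y_i\otimes 1+x^{m_in}\otimes y_i\notin A\otimes A$ shows $A$ is not even a sub-coalgebra. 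Hence Lemma~\ref{xxlem1.1} cannot be applied to $A$, and your reconstruction step --- which assumes $A\cong\Bbbk[x]$ or $\Bbbk[x^{\pm 1}]$ and that $H$ is (a localisation of) a single Ore extension $A[u;\tau,\delta]$ --- breaks down for precisely the family it must produce. Your closing remark that the Hopf-subalgebra verification ``needs care'' is on point, but it is not a matter of care: the claim is simply false, and the argument must be reorganised around that.

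The route that actually works runs in the opposite direction: one lifts the generator of $\overline H$ to a grouplike (case~(i)) or skew-primitive (case~(ii)) element of $H$, obtaining a one-variable commutative Hopf subalgebra $\Bbbk[x^{\pm 1}]$ or $\Bbbk[x]$ sitting \emph{inside} $H$, and then analyses $H$ via the grading or filtration this element induces. The coinvariant coideal subalgebra still enters, but its structure is determined jointly with the Hopf data rather than read off in advance from Lemma~\ref{xxlem1.1}; this is exactly how the non-polynomial coinvariants of family~(IV) arise. Several of your auxiliary claims (that $H$ is generated by $A$ and one element, that $H$ affine forces $A$ affine, additivity of GK-dimension) also lean on the Ore-extension picture and so inherit the same defect.
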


\begin{example} \cite[Construction 1.1]{GZ10}
\label{xxex2.4}
Let $n$ be a positive integer and $q \in \Bbbk^{\times}$, 
and define $A(n,q)$ to be the algebra 
$\Bbbk \langle x^{\pm 1}, y \mid xy = q yx\rangle$.
There is a unique Hopf algebra structure on $A(n,q)$ under 
which $x$ is grouplike and $y$ is skew primitive, with
$\Delta(y) = y\otimes 1+ x^n \otimes y$. Clearly Example 
\ref{xxex2.1} is the case $q = 1$.
\end{example}

\begin{example} \cite[Construction 1.2]{GZ10}
\label{xxex2.5}
Let $n, p_0, p_1, \ldots , p_s$ be positive integers and 
$q \in \Bbbk^{\times}$ with the following properties:
\begin{enumerate}
\item[(a)] 
$s \geq 2$ and $1 < p_1 < p_2 <\cdots < p_s$;
\item[(b)]
$p_0 \mid n$ and $p_0, p_1,\ldots, p_s$ are pairwise 
relatively prime;
\item[(c)] 
$q$ is a primitive $\ell$th root of unity, where 
$\ell= (n/p_0)p_1p_2 \cdots p_s$.
\end{enumerate}

Set $m = p_1p_2 \cdots p_s$ and $m_i = \frac{m}{p_i}$ for 
$i = 1,\ldots, s$. Fix an indeterminate $y$ and consider the 
subalgebra $A := \Bbbk\langle y_1, \ldots, y_s\rangle 
\subseteq \Bbbk[y]$, where $y_i := y^{m_i}$ for $i = 1,\ldots, s$. 
The $\Bbbk$-algebra automorphism of $\Bbbk[y]$ sending $y \to qy$ 
restricts to an automorphism $\sigma$ of $A$. There is a unique 
Hopf algebra structure on the skew Laurent polynomial ring 
$B = A[x^{\pm 1};\sigma]$ such that $x$ is grouplike and the 
$y_i$ are skew primitive, with
$$\Delta (y_i) = y_i \otimes 1+ x^{m_i n} \otimes y_i$$ 
for $i = 1, \ldots, s$. This Hopf algebra is denoted by 
$B(n, p_0,\ldots , p_s, q)$.
\end{example}

\begin{example} \cite[Construction 1.4]{GZ10}
\label{xxex2.6}
Let $n$ be a positive integer, and set 
$C(n):=\Bbbk[y^{\pm 1}][x; (y^n - y) \frac{d}{dy}]$. That 
is, $C(n)$ is an Ore extension of $\Bbbk[y^{\pm 1}]$ with 
indeterminate $x$ satisfying the equation
$$ x f(y) =f(y) x+ (y^n-y) \frac{df(y)}{dy}$$
for all $f(y)\in \Bbbk[y^{\pm 1}]$. There is a unique Hopf 
algebra structure on $C(n)$ such that $y$ is grouplike and 
$x$ is skew primitive, with
$$\Delta(x) = x\otimes y^{n-1} +1\otimes x.$$ 
\end{example}

\bigskip

Naturally, in the light of Theorem \ref{xxthm2.3} Goodearl and 
Zhang asked the obvious question: does the classification remain 
true without assuming \eqref{E2.1.1}? In 2013 Wang, Zhang and 
Zhuang \cite{WZZ13} showed that the answer is ``No''. We'll 
review their result in Subsection \ref{xxsec2.5} 

Note that all the algebras listed in Theorem \ref{xxthm2.3} are 
assembled from two one-dimensional domains, as discussed in 
Subsection \ref{xxsec1.5}. In particular, $A(n,q)$ and $C(n)$ 
are Ore extensions of the Hopf algebra $\Bbbk[t^{\pm 1}]$, 
special cases of the Hopf Ore construction which we review in 
the next subsection.

\subsection{Hopf Ore extensions and IHOEs}
\label{xxsec2.3}
Hopf Ore extensions of a Hopf algebra $A$, denoted by 
$A[x;\sigma, \delta]$, were defined and studied by Panov 
\cite{Pan03} in 2003. His definition was refined and extended 
in \cite{BO15} and then in \cite{Hu19}, to 
\emph{iterated Hopf Ore extensions} (IHOEs) over a field 
$\Bbbk$. The following definition is significantly more 
general than the original one given in 
\cite[Definition 1.0]{Pan03} and its modification in 
\cite[Definition 2.1]{BO15}. The improvement here is due to 
recent work of Huang \cite{Hu19}. Recall that, given an 
algebra automorphism $\sigma$ of $A$, a $\sigma$-derivation 
$\delta$ of $A$ is a $\Bbbk$-linear endomorphism of $A$ such that 
$$\delta(ab) = \sigma(a)\delta(b) + \delta(a)b$$ 
for all $a,b \in A$.

\begin{definition}
\label{xxdef2.7} Let $\Bbbk$ be a field.
\begin{enumerate}
\item[(1)] 
Let $A$ be a Hopf $\Bbbk$-algebra. A {\it Hopf Ore extension} 
(abbreviated to {\it HOE}) of $A$ is an algebra $H$ such that
\begin{enumerate}
\item[(1a)]
$H$ is a Hopf $\Bbbk$-algebra with Hopf subalgebra $A$;
\item[(1b)]
there exist an algebra automorphism $\sigma$ of $A$ and a
$\sigma$-derivation $\delta$ of $A$ such that $H = A[x;\sigma,
\delta]$.
\end{enumerate}
\item[(2)] 
An {\it {\rm{(}}$n$-step{\rm{)}} iterated Hopf Ore extension of 
$\Bbbk$} {\rm{(}}abbreviated to {\it {\rm{(}}$n$-step{\rm{)}} 
IHOE {\rm{(}}of $\Bbbk${\rm{)}}}{\rm{)}} is a Hopf algebra 
\begin{equation} 
\notag%\label{E2.7.1}\tag{E2.7.1} 
H = \Bbbk[X_1][X_2; \sigma_2 , \delta_2] \cdots 
[X_n; \sigma_n , \delta_n],
\end{equation} 
where
\begin{enumerate}
\item[(2a)]
$H$ is a Hopf algebra;
\item[(2b)]
$H_{(i)} := \Bbbk\langle X_1, \ldots , X_i \rangle$ is 
a Hopf subalgebra of $H$ for $i = 1, \ldots , n;$
\item[(2c)]
$\sigma_i$ is an algebra automorphism of $H_{(i-1)}$, 
and $\delta_i$ is a $\sigma_i$-derivation of $H_{(i-1)}$, 
for $i = 2, \ldots , n$.
\end{enumerate}
\end{enumerate}
\end{definition}

The definition of an HOE in \cite[Definition 2.1]{BO15} 
required, in addition to Definition \ref{xxdef2.7}(1a,1b), that
\begin{enumerate}
\item[(1c)]
there are $a,b\in A$ and $v, w\in A\otimes A$ such that
\begin{equation}
\notag
\Delta(x)=a\otimes x+x\otimes b+ v(x\otimes x)+ w.
\end{equation}
\end{enumerate}
However, by Huang's theorem  \cite[Theorem 1.3]{Hu19},
if $A \subseteq H$ are noetherian $\Bbbk$-algebras satisfying 
hypotheses (1a) and (1b) of Definition \ref{xxdef2.7}(1), and 
$A\otimes A$ is a domain, then, up to a change of variable,
\begin{enumerate}
\item[(1d)]
there are $a\in A$ and $w\in A\otimes A$ such that
\begin{equation}
\notag%\label{E2.7.3}\tag{E2.7.3}
\Delta(x)=a\otimes x+x\otimes 1+w.
\end{equation}
\end{enumerate}
This means that under the conditions in \cite[Theorem 1.3]{Hu19}, 
namely that $A \otimes A$ is a noetherian domain, Definition 
\ref{xxdef2.7}(1) is equivalent to \cite[Definition 2.1]{BO15}. 
For the subalgebras $H_{(i)}$ appearing in Definition 
\ref{xxdef2.7}(2) it is easy to show that 
$H_{(i)} \otimes H_{(i)}$ is  a noetherian domain, so a similar 
comment applies to the comultiplication of an IHOE over $\Bbbk$. There 
are moreover strong restrictions on the automorphisms $\sigma_i$ 
and derivations $\delta_i$ which can occur - for details, see 
\cite{BO15}, \cite{Hu19} or \cite{BZ20}.

In Examples \ref{xxex2.8} and \ref{xxex2.9} in the next 
subsection we construct Hopf Ore extensions that are 
generalizations of the Hopf Ore extensions $A(n,q)$ and $C(n)$ 
of Theorem \ref{xxthm2.3}. 

\subsection{Non-noetherian Hopf domains of GK-dimension 2}
\label{xxsec2.4}
In this subsection we describe two families of Hopf domains of 
GK-dimension two which are not affine and not 
noetherian. 

\begin{example}\cite[Example 1.2]{GZ17}
\label{xxex2.8}
Let $K = \Bbbk G$ where $G$ is a group, let $\chi: G \to 
\Bbbk^{\times}$ be a character of $G$ and choose any element 
$e$ in the center of $G$, denoted by $Z(G)$. Define an 
algebra automorphism $\sigma_{\chi}: K \to K$ by 
$$\sigma_{\chi}(g) = \chi(g)g, \quad \forall\; g\in G,$$
so $\sigma_{\chi}$ is the winding automorphism associated 
to the algebra map from $K$ to $\Bbbk$ induced by $\chi$. 
Let $\delta= 0$. By \cite[Example 5.4]{WZZ16}, 
$K[z;\sigma_{\chi}]$ is an HOE of $K$ with 
$\Delta(z)=z\otimes 1+e\otimes z$. This HOE is denoted by 
$A_G(e,\chi)$.

Now carry out the above construction when $G$ is a non-trivial 
non-cyclic subgroup of $({\mathbb Q},+)$. Then 
$$\GKdim \Bbbk G=1, \quad {\text{and}}\quad \GKdim A_G(e,\chi)=2,$$ 
using \cite[Lemma 2.2]{HK96} for the second equality. By 
\cite[Theorem 0.1(4)]{GZ17}, $A_G(e,\chi)$ satisfies 
\eqref{E2.1.1}. When, for instance, $\Bbbk={\mathbb C}$, there 
are many characters of $G$; for example, with 
$\lambda \in \mathbb{R}$, $\exp_{\lambda}: 
r\to \exp(2\pi i r \lambda)$ is a  character from 
$({\mathbb Q},+)$ to ${\mathbb C}^{\times}$. 
Since $A_G(e,\chi)$ is a free 
$\Bbbk G$-module it is clear that  $A_G(e,\chi)$ is not 
noetherian. Being a skew polynomial algebra with coefficient 
ring the group algebra of a torsion-free abelian group, 
$A_G(e,\chi)$ is a domain.
\end{example}

\begin{example} \cite[Example 1.3]{GZ17}
\label{xxex2.9} 
Again, let $K = \Bbbk G$ where $G$ is a group and fix 
$e \in Z(G).$ Let $\tau: G \to (\Bbbk,+)$ be an additive 
character of $G$. Define a $\Bbbk$-linear derivation 
$\delta: K \to K$ by 
$$\delta(g) = \tau(g)g(e-1),\quad \forall\;  g\in G.$$ 
Then $C_G(e, \tau) :=  K[z;\delta]$ is an HOE of $K$ when 
we define $\Delta(z)=z\otimes 1+e\otimes z$.

As in the previous example, let $G$ be a nontrivial non-cyclic 
subgroup of $({\mathbb Q},+)$. Again by \cite[Lemma 2.2]{HK96} 
(or \cite[Theorem 1.2(1)]{Zha97})
$$\GKdim \Bbbk G=1, \quad {\text{and}}\quad \GKdim C_G(e, \tau)=2.$$ 
Since we are assuming $\Bbbk$ has characteristic zero, 
there are many additive characters from $G$ to $(\Bbbk,+)$. For 
example, let $\lambda \in \Bbbk$; then 
$i_{\lambda}: r\to r \lambda$ is 
an additive character from $G\to (\Bbbk,+)$.

Denote the augmentation ideal of $K$ by $K^+$. Since 
$C_G(e, \tau)K^+$ is an ideal of $C_G(e, \tau)$ with 
$C_G(e, \tau)/C_G(e, \tau)K^+ \cong \Bbbk[z]$,
$C_G(e,\chi)$ satisfies \eqref{E2.1.1}. In the same 
way as Example \ref{xxex2.8}, $C_G(e, \tau)$ is not 
noetherian, but it is a domain.
\end{example}

The next family of examples are \emph{not} HOEs, 
although they are like the others assembled from 
two domains of GK-dimension 1. Since the details 
are quite technical, we only give a brief sketch here, 
referring the reader to \cite{GZ17} for more information. 

\begin{example} \cite[Construction 3.1]{GZ17}
\label{xxex2.10} 
Let $G$ be a nontrivial non-cyclic subgroup of 
$({\mathbb Q},+)$, let $\{p_i : i \in I\}$ be an infinite 
set of pairwise relatively prime integers, and let $\chi$ 
be a character of a certain group $GM$ with 
$G \subseteq GM \subseteq ({\mathbb Q},+)$. Given this 
data one can construct (over several pages) a non-affine 
Hopf domain of GK-dimension two, denoted by 
$B_G(\{p_i\},\chi)$. By \cite[Construction 1.2]{GZ17},
$B_G(\{p_i\},\chi)$ satisfies \eqref{E2.1.1} and 
is not noetherian.
\end{example}

Here is the classification result parallel to Theorem
\ref{xxthm2.3}. Let ${\mathbb Z}_{\langle 2\rangle}$ 
denote the localization of the ring ${\mathbb Z}$ at 
the maximal ideal $\langle 2 \rangle $.

\begin{theorem}\cite[Theorem 0.1]{GZ17}
\label{xxthm2.11} 
Assume that $\Bbbk$ has characteristic 0 and let $H$ 
be a Hopf domain over $\Bbbk$ of GK-dimension two which 
is not noetherian and satisfies \eqref{E2.1.1}. Then $H$ 
is isomorphic as a Hopf algebra to one of the following.
\begin{enumerate}
\item[(1)]
$\Bbbk G$ where $G$ is a non-finitely generated subgroup 
of ${\mathbb Q}^{2}$ containing ${\mathbb Z}^2$.
\item[(2)]
$\Bbbk G$ where $G = L\rtimes R$ for subgroups $L$ of 
${\mathbb Q}$ containing ${\mathbb Z}$ and $R$ of 
${\mathbb Z}_{(2)}$ containing ${\mathbb Z}$, with at 
least one of $L$ and $R$ not finitely generated.
\item[(3)] 
$A_{G}(e,\chi)$ in Example \ref{xxex2.8}.
\item[(4)]
$C_{G}(e,\tau)$ in Example \ref{xxex2.9}.
\item[(5)]
$B_G(\{p_i\},\chi)$ in Example \ref{xxex2.10}.
\end{enumerate}
\end{theorem}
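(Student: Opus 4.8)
The plan is to follow the same strategy that proved Theorem \ref{xxthm2.3}, now dropping noetherianity but keeping \eqref{E2.1.1}. First I would invoke Lemma \ref{xxlem2.2} to obtain a Hopf surjection $\pi : H \to \overline{H}$ with $\overline{H} \cong \Bbbk[t^{\pm 1}]$ ($t$ grouplike, case (i)) or $\overline{H} \cong \Bbbk[t]$ ($t$ primitive, case (ii)), and set $A = H^{\mathrm{co}\,\overline{H}}$, the subalgebra of coinvariants for the induced coaction. Standard faithful-flatness results for Hopf algebras (the base field being a field, $\overline{H}$ being a domain of GK-dimension one, hence a commutative Hopf domain $\Bbbk[t^{\pm 1}]$ or $\Bbbk[t]$) give that $H$ is a faithfully flat $A$-module and that $A$ is itself a Hopf subalgebra of $H$ of GK-dimension one; by Lemma \ref{xxlem1.1} (in its non-affine form via Remark \ref{xxrem1.2}), $A$ is either a group algebra $\Bbbk G_0$ with $G_0$ a rank-one torsion-free abelian group (possibly $\mathbb Z$), or an enveloping algebra $U(\mathfrak g_0)$ with $\mathfrak g_0$ one-dimensional, i.e. $\Bbbk[y]$ or $\Bbbk[y^{\pm1}]$.

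Next I would reconstruct $H$ as a crossed-product-type extension of $A$ by $\overline{H}$. In case (i) the grouplike $t$ lifts to a grouplike $x \in H$ (or at worst $H$ is generated over $A$ by such an $x$ together with $x^{-1}$), and conjugation by $x$ is an algebra automorphism $\sigma$ of $A$; one shows $H = A[x^{\pm 1};\sigma]$ as in Example \ref{xxex2.8} and its noetherian analogues. In case (ii) the primitive $t$ lifts to a skew-primitive or primitive $z \in H$ with $\Delta(z) = z \otimes 1 + e \otimes z$ for some grouplike $e \in A$, and $z$ acts on $A$ by a $\sigma$-derivation; one shows $H = A[z;\sigma,\delta]$, a Hopf Ore extension in the sense of Definition \ref{xxdef2.7}, as in Example \ref{xxex2.9}. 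The compatibility constraints between $\Delta$ and the Ore data — exactly the restrictions on $\sigma_i,\delta_i$ recorded after Definition \ref{xxdef2.7}, drawn from \cite{BO15,Hu19,BZ20} — then pin down $\sigma$ and $\delta$ in terms of a character $\chi$ (resp. additive character $\tau$) on the group $G_0$, or in terms of the Ore-extension normal forms $A(n,q)$, $C(n)$. Running through the finitely many shapes of $A$ and the admissible $(\sigma,\delta)$, together with the constraint that $\GKdim H = 2$ (so the extension must genuinely add one to the dimension — here \cite[Lemma 2.2]{HK96} controls GK-dimension of skew extensions of group algebras of subgroups of $\mathbb Q$), produces precisely the list: group algebras of subgroups of $\mathbb Q^2$ or of semidirect products $L \rtimes R$ (cases (1),(2)), the Ore extensions $A_G(e,\chi)$ and $C_G(e,\tau)$ (cases (3),(4)), and finally the more exotic $B_G(\{p_i\},\chi)$ (case (5)).

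The main obstacle is case (5): the non-noetherian, non-Ore examples $B_G(\{p_i\},\chi)$. These arise because when $A$ is the group algebra of a non-finitely-generated rank-one group, the skew-primitive generator $z$ need not normalise $A$ on the nose — instead $H$ is built from infinitely many "partial" skew-primitive elements $z_i$ attached to the coprime integers $p_i$, interacting through the divisibility structure of $G \subseteq GM \subseteq \mathbb Q$, much as the generalized Liu algebras $B(n, p_0, \ldots, p_s, q)$ of Example \ref{xxex2.5} arise in the noetherian classification. Ruling out all other non-noetherian possibilities, and verifying that the ones that do occur assemble into exactly the $B_G(\{p_i\},\chi)$ of Example \ref{xxex2.10}, is the delicate combinatorial heart of the argument; this is where one must be most careful, since dropping noetherianity removes the finite-generation crutch that made the bookkeeping in Theorem \ref{xxthm2.3} manageable. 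The remaining verifications — that each algebra on the list is indeed a Hopf domain of GK-dimension two satisfying \eqref{E2.1.1}, and that the list has no redundancies — are the content of Examples \ref{xxex2.8}, \ref{xxex2.9} and \ref{xxex2.10} and the surrounding discussion, and I would simply cite \cite{GZ17} for the full details.
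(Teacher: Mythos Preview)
The paper under review is a survey and contains no proof of Theorem \ref{xxthm2.11}; the result is simply quoted from \cite[Theorem 0.1]{GZ17}, with the surrounding text serving only to introduce the examples appearing in the statement. There is therefore nothing in this paper against which to compare your proposal.

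That said, your outline is a reasonable sketch of the strategy one would expect, modelled on the noetherian argument behind Theorem \ref{xxthm2.3}, and you correctly flag case (5) as the place where genuine new work is required. A couple of points you pass over too quickly, and which the actual proof in \cite{GZ17} must address: first, the coinvariant subalgebra $A = H^{\mathrm{co}\,\overline{H}}$ is a priori only a left coideal subalgebra, not a Hopf subalgebra, and faithful flatness of $H$ over $A$ is not automatic in the absence of noetherianity; second, your claim in case (i) that the grouplike $t \in \overline{H}$ lifts to a grouplike $x \in H$ is exactly what can fail in the non-noetherian setting, and this failure is what forces the infinitely-generated algebras $B_G(\{p_i\},\chi)$ into the classification rather than being an incidental complication. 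Your closing sentence, citing \cite{GZ17} for the full details, is in effect what the survey itself does.
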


Thus Theorems \ref{xxthm2.3} and \ref{xxthm2.11} together 
with Lemma \ref{xxlem1.1} and Remark \ref{xxrem1.2}(1) give 
a complete classification when $\Bbbk$ has characteristic 0 
of the Hopf $\Bbbk$-algebra domains of GK-dimension at most 
2 which satisfy \eqref{E2.1.1}. But not all Hopf domains of 
GK-dimension 2 satisfy \eqref{E2.1.1}, as we explain in the 
next subsection.

\subsection{Hopf domains of GK-dimension 2 with no infinite 
classical subgroup}
\label{xxsec2.5} 
The hope that the algebras of Subsection \ref{xxsec2.2} 
constitute a complete list of affine Hopf domains of 
GK-dimension 2 is demolished by the following result. 
As the notation suggests, the new algebras are variants 
of those in Examples \ref{xxex2.5}.

\begin{theorem} \cite{WZZ13}
\label{xxthm2.12} 
Assume that $\Bbbk$ has characteristic 0. Let 
$s, p_1, \ldots , p_s$ be integers, all greater than 1 
and with $\mathrm{gcd}(p_i,p_j) = 1$ when $i \neq j$. 
Fix scalars $\sigma_1, \ldots , \sigma_s$ in $\Bbbk$, 
not all equal. For $\ell := \Pi_{i=1}^s p_i$, choose 
$q$, an $\ell$th root of 1 in $\Bbbk$. Then there is a 
family $B(n,\{p_i\}_1^s,q,\{\sigma_i\}_1^s)$ of Hopf 
$\Bbbk$-algebras with the following properties.
\begin{enumerate}
\item[(1)] 
$B(n,\{p_i\}_1^s,q,\{\sigma_i\}_1^s)$ is an affine domain 
of GK-dimension 2.
\item[(2)] 
$B(n,\{p_i\}_1^s,q,\{\sigma_i\}_1^s)$ does not satisfy 
\eqref{E2.1.1}.
\item[(3)] 
$B(n,\{p_i\}_1^s,q,\{\sigma_i\}_1^s)$ is pointed, generated 
by grouplikes and skew primitives.
\item[(4)] 
$B(n,\{p_i\}_1^s,q,\{\sigma_i\}_1^s)$ is a finite module 
over its affine center, hence is noetherian and satisfies 
a polynomial identity.
\item[(5)] 
$\mathrm{gldim}B(n,\{p_i\}_1^s,q,\{\sigma_i\}_1^s)  < 
\infty$ if and only if $s=2$ and $\sigma_1 \neq \sigma_2$. 
In this case the global dimension is 2.
\item[(6)] 
Up to a permutation of $\{1, \ldots , s \}$ and 
multiplication of $\{\sigma_i\}_1^s$ by a non-zero scalar, 
these Hopf algebras are mutually non-isomorphic.
\end{enumerate}
\end{theorem}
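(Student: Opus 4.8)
The plan is to realize the family by an explicit presentation deforming family~(IV) of Theorem~\ref{xxthm2.3} (Example~\ref{xxex2.5}), with the scalars $\sigma_i$ entering the $p_i$-th power relations in the spirit of Liu's identity $x^n=1-g^n$ (Example~\ref{xxex1.6}); one then extracts the ring-theoretic properties from the resulting PBW basis and the homological ones from the structure of the coefficient variety, treating the failure of \eqref{E2.1.1} as the crux. Concretely, set $\ell=\prod_{i=1}^{s}p_i$ and $m_i=\ell/p_i$ (so $\gcd(m_1,\dots,m_s)=1$), and let $B$ be generated by $x^{\pm1}$ and $y_1,\dots,y_s$ with $x$ grouplike, $\epsilon(x)=1$, $\epsilon(y_i)=0$,
\[
\Delta(y_i)=y_i\otimes 1+x^{m_in}\otimes y_i,\qquad S(x)=x^{-1},\qquad S(y_i)=-x^{-m_in}y_i,
\]
subject to $xy_i=q^{m_in}y_ix$, $y_iy_j=y_jy_i$, and the single family of relations
\[
y_i^{p_i}-y_j^{p_j}=(\sigma_i-\sigma_j)(x^{\ell n}-1)\qquad(1\leq i,j\leq s),
\]
which, when the $\sigma_i$ are all equal, degenerates to ``$y_i^{p_i}=y_j^{p_j}$ for all $i,j$'' and so recovers the relation $y_i^{p_i}=y^{\ell}$ defining Example~\ref{xxex2.5}. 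The first task is to check that these data are consistent, i.e. that $\Delta$ extends to an algebra homomorphism and $S$ to a bijective antipode; the only step needing real care is compatibility of $\Delta$ with the $p_i$-th power relations, where the quantum binomial theorem (so $q^{m_in}$ must have order $p_i$) gives $\Delta(y_i^{p_i})=y_i^{p_i}\otimes 1+x^{\ell n}\otimes y_i^{p_i}$, and the two sides of the displayed relation acquire the same comultiplication precisely because the exponents have been chosen so that $m_in\cdot p_i=\ell n$ for every $i$. I expect this verification --- appearing in the source as the proof that $B$ is a Hopf algebra --- to be the most laborious part, and the coordination of exponents and placement of the $\sigma_i$ needed to make both this and the vanishing in the last paragraph succeed to be the main conceptual obstacle.

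Granting the presentation, properties (1), (3) and (4) follow routinely. Filtering $B$ by total degree in the $y_i$, its associated graded is the Example~\ref{xxex2.5}-type ring $\Bbbk\langle y^{m_1},\dots,y^{m_s}\rangle[x^{\pm1};\sigma]$ with $\sigma\colon y\mapsto qy$, a domain of GK-dimension $2$; hence $B$ is an affine domain of $\GKdim 2$, giving (1). For (3), $x$ is grouplike and the $y_i$ are skew primitive and generate $B$, and the coradical filtration is the evident one with coradical $\Bbbk[x^{\pm1}]$, so $B$ is pointed and generated by grouplikes and skew primitives. For (4), the element $w:=y_1^{p_1}-\sigma_1(x^{\ell n}-1)$ is central, and the monomials $y_1^{b_1}\cdots y_s^{b_s}$ with $0\leq b_i<p_i$ form a basis of $B$ as a free module of rank $\ell$ over the affine central subalgebra $\Bbbk[x^{\pm\ell n},w]$; hence, by the Artin--Tate lemma and the standard theory of rings module-finite over a central affine subring, $B$ is noetherian and satisfies a polynomial identity.

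Property (5) reduces to a regularity question. Writing $R$ for the commutative ring $\Bbbk[x^{\pm1},y_1,\dots,y_s]$ modulo the ideal generated by $y_i^{p_i}-y_1^{p_1}-(\sigma_i-\sigma_1)(x^{\ell n}-1)$ for $2\leq i\leq s$, the quantum twist $xy_i=q^{m_in}y_ix$ does not affect finiteness of global dimension, so $\gldim B<\infty$ iff $R$ is regular, in which case $\gldim B=2$. In characteristic $0$ the Jacobian criterion shows $\Spec R$ is singular exactly along the locus $y_1=\dots=y_s=0$, $x^{\ell n}=1$, where the Jacobian of the $s-1$ defining relations has rank at most $1$; hence $R$ is regular iff $s-1=1$ and $\sigma_1\neq\sigma_2$, which is (5). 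For (6), one reads off from $B$ the integer $n$ (from the PI-degree, or from the orders of the commutation data) up to the stated scaling, the multiset $\{p_i\}$ --- hence $s$ --- from the singularity structure of $\Spec Z(B)$, and $\{\sigma_i\}$ up to a common nonzero rescaling (the only rescalings of the $y_i$ compatible with the relations come from a single $y\mapsto\lambda y$, which multiplies every $y_i^{p_i}$ by $\lambda^{\ell}$) and a relabelling of $\{1,\dots,s\}$; since any Hopf isomorphism preserves all of these invariants, we get the asserted rigidity.

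The heart of the theorem is property (2): it asks that $B$ does not satisfy \eqref{E2.1.1}, i.e. that $\Ext^1_B(\Bbbk,\Bbbk)=0$ (equivalently, by Lemma~\ref{xxlem2.2}, that $B$ has no Hopf quotient isomorphic to $\Bbbk[t^{\pm1}]$ with $t$ grouplike or to $\Bbbk[t]$ with $t$ primitive). Since $B$ is augmented, $\Ext^1_B(\Bbbk,\Bbbk)\cong(\fm/\fm^2)^{*}$ with $\fm=\ker\epsilon=(x-1,y_1,\dots,y_s)$, so it suffices to show $\fm=\fm^2$. Modulo $\fm^2$, the relation $xy_i=q^{m_in}y_ix$ collapses to $(q^{m_in}-1)y_i\equiv 0$, and since $q^{m_in}$ is a nontrivial root of unity (as already forced by the Hopf structure) this gives $y_i\in\fm^2$ for every $i$; then the relation $y_i^{p_i}-y_j^{p_j}=(\sigma_i-\sigma_j)(x^{\ell n}-1)$, whose left-hand side lies in $\fm^2$ because $p_i,p_j\geq 2$, reduces to $(\sigma_i-\sigma_j)\,\ell n\,(x-1)\equiv 0\pmod{\fm^2}$, and since the $\sigma_i$ are \emph{not all equal} and $\ell n\neq 0$, this forces $x-1\in\fm^2$. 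Hence $\fm=\fm^2$ and $\Ext^1_B(\Bbbk,\Bbbk)=0$, proving (2). The hypothesis that the $\sigma_i$ are not all equal is used exactly here, and is essential: when they coincide the last relation says nothing about $x-1$ modulo $\fm^2$, $B$ degenerates to a member of family~(IV) of Theorem~\ref{xxthm2.3}, and \eqref{E2.1.1} holds.
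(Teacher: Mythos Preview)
This survey paper does not itself contain a proof of Theorem~\ref{xxthm2.12}; the result is simply quoted from \cite{WZZ13}, with no argument given here beyond the citation. So there is no ``paper's own proof'' against which to compare your proposal.

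That said, your sketch is a faithful reconstruction of the strategy carried out in \cite{WZZ13}. The presentation you write down --- a deformation of Example~\ref{xxex2.5} by the Liu-type relations $y_i^{p_i}-y_j^{p_j}=(\sigma_i-\sigma_j)(x^{\ell n}-1)$ --- is precisely the construction there; the filtered-to-graded argument for (1), the PBW basis over the central subalgebra for (4), the Jacobian analysis for (5), and above all the computation $\mathfrak{m}=\mathfrak{m}^2$ for (2) are all the same ideas used in the source. Your identification of (2) as the crux, and of the hypothesis ``$\sigma_i$ not all equal'' as being consumed exactly at the step forcing $x-1\in\mathfrak{m}^2$, is exactly right. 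One caution: the precise exponent in the commutation relation $xy_i=q^{?}y_ix$ needs to be chosen so that the $q$-binomial theorem applies with a primitive $p_i$-th root of unity, and your choice $q^{m_in}$ does not obviously achieve this for arbitrary $n$ (compare the exponent $q^{m_i}$ in Example~\ref{xxex2.5}); you should verify this against \cite{WZZ13} rather than relying on the analogy alone, since the argument for (2) depends on that scalar being $\neq 1$.
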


Wang, Zhang and Zhuang show in \cite{WZZ13} that any as 
yet unknown example of an affine Hopf domain of 
GK-dimension 2 in characteristic 0, especially if 
generated by grouplikes and skew primitives, would 
have to exhibit very constrained features. They therefore ask:

\begin{question} \cite[Remark 4.5]{WZZ13}
\label{xxque2.13}  
Assume $\Bbbk$ has characteristic 0. Are the algebras of 
Theorem \ref{xxthm2.12} the only affine Hopf $\Bbbk$-algebra 
domains of GK-dimension 2 not satisfying \eqref{E2.1.1}?
\end{question}

It is tempting, given the classification in characteristic 
0 of prime AS-regular algebras of dimension 1 and the 
partial classification of GK-dimension 2 Hopf domains, 
to consider prime AS-regular Hopf algebras of GK-dimension 2. 
Proposition \ref{xxpro1.17} indicates that this may be a 
formidable task, so perhaps a preliminary step might be 
the following.

\begin{question} [Dingguo Wang \cite{Wa20}]
\label{xxque2.14}
Find homological invariants (additional to, or 
generalising, the integral order and integral minor) which 
are defined for pointed Hopf algebras, and which may help 
to classify prime pointed Hopf algebras of GK-dimension two.
\end{question}

\subsection{2-step IHOEs in positive characteristic}
\label{xxsec2.6}
In this subsection let $p$ be a prime and let $\Bbbk$ be an 
algebraically closed field of characteristic $p$. Consider 
the project to classify the affine Hopf $\Bbbk$-algebra 
domains of GK-dimension at most 2. If $H$ is such an algebra 
with $\mathrm{GKdim} H = 1$, then $H$ is commutative by 
Tsen's theorem \cite[p. 374]{Co77}, and so $H= \Bbbk [t]$ or 
$\Bbbk [t^{\pm 1}]$ by Lemma \ref{xxlem1.1}.

However when $\mathrm{GKdim}H = 2$ the situation appears 
much more complex than in characteristic 0.  The 
\emph{connected} Hopf $\Bbbk$-algebras, that is those 
whose coradical is $\Bbbk$, form a good starting point. In 
arbitrary characteristic every IHOE is connected by 
\cite[Proposition 2.5]{BO15}. The cocommutative connected 
Hopf algebras in characteristic 0 are the enveloping 
algebras of Lie algebras. Zhuang showed in 
\cite[Proposition 7.6]{Zhu13} that in characteristic 0, there 
are only two connected Hopf algebras of GK-dimension 2, and 
both are cocommutative IHOEs: namely the enveloping algebras 
of the two 2-dimensional Lie algebras. But in positive 
characteristic it's a different story, as we now explain. 

Let ${\bf d_s} = \{d_s\}_{s\geq 0}$, ${\bf b_s} = 
\{b_s\}_{s\geq 0}$ and 
${\bf c_{s,t}} = \{c_{s,t}\}_{0\leq s<t}$ be sequences 
of scalars in $\Bbbk$ with only finitely many nonzero 
terms. Let $H({\bf d_s,b_s,c_{s,t}})$ denote the 
Ore extension $\Bbbk[X_1][X_2;\mathrm{Id},\delta]$ 
where 
\begin{equation}
\notag%\label{E2.14.1}\tag{E2.14.1}
\delta(X_1)=\sum_{s\geq 0} d_s X_1^{p^s},
\end{equation}
and where the coalgebra structure is as follows. Define 
maps $\Delta$, $\epsilon$, and $S$ on $\{X_1, X_2\}$ by 
setting
$$\begin{aligned}
\Delta(X_1)&=X_1\otimes 1+1\otimes X_1,\\
\Delta(X_2)&=X_2\otimes 1+1\otimes X_2+ w,\\
\epsilon(X_1)&=\epsilon(X_2)=0,\\
S(X_1)&=-X_1,\\
S(X_2)&=-X_2-m(\mathrm{Id}\otimes S)(w),
\end{aligned}
$$ 
%$$\begin{aligned}
%\Delta(X_1)=X_1\otimes 1+1\otimes X_1, \quad & \quad 
%\Delta(X_2)=X_2\otimes 1+1\otimes X_2+ w,\\
%\epsilon(X_1) &=\epsilon(X_2)=0,\\
%S(X_1)=-X_1, \quad & \quad S(X_2)=-X_2-m(\mathrm{Id}\otimes S)(w),
%\end{aligned}
%$$ 
where 
\begin{align}
\notag%\label{E2.14.2}\tag{E2.14.2}
w&=\sum_{s\geq 0} b_s 
  \left(\sum_{i=1}^{p-1} \frac{(p-1)!}{i! (p-i)!} (X_1^{p^s})^{i}
  \otimes (X_1^{p^s})^{p-i}\right)\\
	\notag
&\qquad \qquad+\sum_{0 \leq s<t} c_{s,t} \left(X_1^{p^s}\otimes 
  X_1^{p^t}- X_1^{p^t}\otimes X_1^{p^s}\right).
\end{align}
  
\begin{theorem}\cite[Theorem 0.3(3)]{BZ20}
\label{xxthm2.15}  
Let $\Bbbk$ be algebraically closed of characteristic 
$p > 0$ and let $H$ be a 2-step IHOE over $\Bbbk$. 
\begin{enumerate}
\item[(1)] 
The Hopf algebra $H$ is isomorphic to 
$H({\bf d_s,b_s,c_{s,t}})$ for some choice of scalars
$\{d_s\}_{s\geq 0}$, $\{b_s\}_{s\geq 0}$ and 
$\{c_{s,t}\}_{0\leq s<t}$.
\item[(2)] 
Two such Hopf algebras $H({\bf d_s,b_s,c_{s,t}})$ and 
$H(({\bf d'_s,b'_s,c'_{s,t}})$ are isomorphic as Hopf 
algebras if and only if there are nonzero scalars 
$\alpha, \beta \in \Bbbk$ such that
$$
d'_s=d_s \alpha^{p^s-1}\beta^{-1}, \quad 
b'_s=b_s \alpha^{p^{s+1}}\beta^{-1}, \quad
c'_{s,t}=c_{s,t} \alpha^{p^s+p^t}\beta^{-1}
$$
for all $s,t$.
\end{enumerate}
\end{theorem}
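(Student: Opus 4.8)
The plan is to establish (1) and (2) as two largely separate tasks, with the coalgebra analysis driving (1) and a careful change-of-variables bookkeeping argument driving (2).

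For part (1), I would start from the fact (cited above) that every IHOE is connected, so $H = \Bbbk[X_1][X_2;\sigma_2,\delta_2]$ with $\Bbbk[X_1]$ a Hopf subalgebra and $H_{(1)} = \Bbbk[X_1]$ connected of GK-dimension $1$. The first step is to pin down the Hopf structure on $\Bbbk[X_1]$: over an algebraically closed field of characteristic $p$, a connected Hopf algebra with underlying algebra $\Bbbk[X_1]$ must, after a change of variable, have $X_1$ primitive (one checks $\Delta(X_1) = X_1\otimes 1 + 1\otimes X_1$ is forced, since the comultiplication of $\Bbbk[X_1]$ corresponds to a one-dimensional formal group, which in char $p$ is still isomorphic to $\widehat{\mathbb{G}}_a$ when we only ask for it as a Hopf algebra — or one invokes the relevant structure result for connected affine Hopf algebras of GK-dimension one). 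Next, apply Huang's theorem \cite[Theorem 1.3]{Hu19} (legitimate since $H_{(1)}\otimes H_{(1)} = \Bbbk[X_1,X_1']$ is a noetherian domain): after a further change of the variable $X_2$, we get $\Delta(X_2) = a\otimes X_2 + X_2\otimes 1 + w$ with $a\in \Bbbk[X_1]$ grouplike and $w\in \Bbbk[X_1]\otimes\Bbbk[X_1]$. Since $\Bbbk[X_1]$ is connected its only grouplike is $1$, so $a=1$ and $\Delta(X_2) = X_2\otimes 1 + 1\otimes X_2 + w$. Now coassociativity applied to $X_2$ forces $w$ to be a Hochschild $2$-cocycle on the coalgebra $\Bbbk[X_1]$ with values in $\Bbbk$; the counit axiom forces $w$ to have no terms with a tensor factor equal to $1$, i.e. $w$ lies in $\Bbbk[X_1]^+\otimes\Bbbk[X_1]^+$. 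One then computes $H^2$ of the coalgebra $\Bbbk[X_1]$ (equivalently, of the divided-power/polynomial Hopf algebra): over char $p$ the nontrivial cocycles are spanned exactly by the ``$p$-th power truncated binomial'' terms $\sum_{i=1}^{p-1}\binom{p}{i}\tfrac1p (X_1^{p^s})^i\otimes(X_1^{p^s})^{p-i}$ and the antisymmetric terms $X_1^{p^s}\otimes X_1^{p^t} - X_1^{p^t}\otimes X_1^{p^s}$ for $s<t$, with the finitely-many-nonzero-coefficients condition coming from $w$ being an honest element of the tensor product. This yields the stated formula for $w$ with parameters $b_s, c_{s,t}$. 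Finally, the restrictions on $\sigma_2$ and $\delta_2$ from the IHOE constraints (see \cite{BO15, Hu19, BZ20}) force $\sigma_2 = \mathrm{Id}$ and $\delta_2(X_1) = \sum_s d_s X_1^{p^s}$ — this is because $\delta_2$ must be a $1$-coderivation-compatible $\sigma_2$-derivation, and such derivations of $\Bbbk[X_1]$ with $X_1$ primitive are precisely the additive (i.e. $p$-polynomial) ones. The formulas for $\epsilon$ and $S$ are then forced by the counit and antipode axioms, giving $S(X_2) = -X_2 - m(\mathrm{Id}\otimes S)(w)$.

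For part (2), the strategy is to determine exactly which changes of variable preserve the presentation. Any Hopf algebra isomorphism $\phi: H({\bf d_s,b_s,c_{s,t}}) \to H({\bf d'_s,b'_s,c'_{s,t}})$ must carry the coradical filtration to the coradical filtration, hence (since $\Bbbk[X_1]$ is the degree-$\leq 1$ piece of that filtration in a suitable sense, being the primitives together with $1$) must send $X_1\mapsto \alpha X_1$ for some $\alpha\in\Bbbk^\times$, because the space of primitives is one-dimensional and spanned by $X_1$. Then $\phi(X_2)$ is a primitive-modulo-$w'$ element lying over $X_2$; analysing the filtration one more step shows $\phi(X_2) = \beta X_2 + f(X_1)$ for some $\beta\in\Bbbk^\times$ and $f\in\Bbbk[X_1]$, and the term $f(X_1)$ can be absorbed (it changes $w'$ only by a coboundary, which one checks is already accounted for in our chosen normal form, so in fact $f$ contributes nothing new and may be taken to be $0$, or more precisely only its $p$-polynomial part survives and gets reabsorbed into the $\delta$). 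Now I would simply push the relations through $\phi$: from $\Delta(\phi(X_1)) = (\phi\otimes\phi)\Delta(X_1)$ nothing new comes; from $\Delta(\phi(X_2)) = (\phi\otimes\phi)\Delta(X_2)$ we read off $\beta w' = (\phi\otimes\phi)(w)$, which after substituting $\phi(X_1^{p^s}) = \alpha^{p^s} X_1^{p^s}$ gives $\beta b'_s (\cdots) = \alpha^{p^{s+1}} b_s(\cdots)$ and $\beta c'_{s,t} = \alpha^{p^s+p^t} c_{s,t}$, i.e. the stated scaling; and from the Ore relation $\phi(X_2)\phi(X_1) - \phi(X_1)\phi(X_2) = \phi(\delta_2(X_1))$, i.e. $\beta[X_2, \alpha X_1] = \beta\alpha\,\delta_2(X_1)$ on one side equals $\sum d'_s (\alpha X_1)^{p^s}$ on the other, we get $\beta\alpha\sum d_s X_1^{p^s} = \sum d'_s \alpha^{p^s} X_1^{p^s}$, hence $d'_s = d_s\,\alpha^{p^s-1}\beta^{-1}$. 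Conversely, given scalars satisfying these relations one checks directly that $X_1\mapsto\alpha X_1$, $X_2\mapsto \beta X_2$ extends to a Hopf algebra isomorphism — a routine verification that it respects all the structure maps.

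The main obstacle I anticipate is the cohomology computation at the heart of part (1): identifying precisely the space of admissible $w$'s, i.e. showing that the coalgebra $2$-cocycles on $\Bbbk[X_1]$ (with the primitive comultiplication, over characteristic $p$) modulo coboundaries, intersected with the ``finitely supported'' elements of $\Bbbk[X_1]^+\otimes\Bbbk[X_1]^+$, are spanned exactly by the two displayed families. In characteristic $0$ this $H^2$ vanishes (every such $w$ is a coboundary, recovering Zhuang's result), so the whole phenomenon is genuinely a characteristic-$p$ effect: the Frobenius twists $X_1^{p^s}$ are ``new primitives modulo coboundaries'' in an appropriate dual sense. Handling the interaction between this cocycle analysis and the simultaneous constraint that $\delta_2$ be a valid ($p$-polynomial) derivation — and checking that no cross-terms between the $d_s$ and the $(b_s, c_{s,t})$ are forced by coassociativity together with the Ore relation — will require care, though it should ultimately decouple. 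I would lean on the detailed computations already carried out in \cite{BO15} and \cite{Hu19}, and on the connectedness of all the $H_{(i)}$, to keep this manageable.
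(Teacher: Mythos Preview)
The paper under review is a survey; Theorem~\ref{xxthm2.15} is stated there without proof, with attribution to \cite[Theorem 0.3(3)]{BZ20} (listed as ``in preparation''). There is therefore no proof in the present paper against which to compare your proposal.

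That said, one point in your outline for part (2) is incorrect and would need repair. You claim that the space of primitives in $H$ is one-dimensional, spanned by $X_1$, and use this to force $\phi(X_1)=\alpha X_1$. In characteristic $p$ this is false: since $X_1$ is primitive, every $p$-polynomial $\sum_s a_s X_1^{p^s}$ is primitive, so the primitives in $\Bbbk[X_1]$ alone form an infinite-dimensional space (and if $w=0$ then $X_2$ is primitive as well). The conclusion $\phi(X_1)=\alpha X_1$ may still be salvageable, but it requires a different argument --- for instance, first showing that $\phi$ restricts to an \emph{algebra} automorphism of a distinguished polynomial subalgebra and then invoking a degree argument, or working with the coradical filtration more carefully (the first term of that filtration is $\Bbbk\oplus P(H)$, which here is infinite-dimensional). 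You should also be prepared for the possibility that $\phi$ does not a priori preserve the subalgebra $\Bbbk[X_1]$; the theorem only asserts the existence of \emph{some} isomorphism when the scalar relations hold, not that every isomorphism has the diagonal form $X_1\mapsto\alpha X_1$, $X_2\mapsto\beta X_2$.

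Your strategy for part (1) --- reducing to a coalgebra $2$-cocycle computation for $\Bbbk[X_1]$ and a separate analysis of the admissible derivations --- is plausible in outline, and you correctly flag the cocycle computation as the crux.
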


\noindent The connected unipotent $\Bbbk$-groups $U$ of 
dimension $n$ are precisely those algebraic groups whose 
coordinate ring is an $n$-step commutative IHOE over 
$\Bbbk$, as explained in \cite[$\S$6.1]{BZ20}. Thus Theorem 
\ref{xxthm2.15} incorporates the classification of the 
2-dimensional connected unipotent groups over 
$\Bbbk$ - namely, their coordinate rings are the algebras 
$H({\bf 0,b_s,c_{s,t}})$. The classification in terms of 
the groups rather than their coordinate rings can be found 
in \cite[$\S$3.7]{KMT74}. For further information 
concerning the algebras  $H({\bf d_s,b_s,c_{s,t}})$, 
see \cite{BZ20}.

As suggested above, one should view Theorem \ref{xxthm2.15} 
as the first step in a program to classify the connected 
affine Hopf domains of GK-dimension 2 over $\Bbbk$ when 
$\Bbbk$ has positive characteristic. From this perspective 
and in the light of Zhuang's classification in 
characteristic 0, \cite[Proposition 7.6]{Zhu13}, it is natural 
to ask:

\begin{question} \cite[Question 6.3]{BZ20}
\label{xxque2.16} 
Is every affine connected Hopf domain over $\Bbbk$ of 
GK-dimension 2 an IHOE?
\end{question}

\section{Further comments, projects and open questions}
\label{xxsec3} 
In this final section $\Bbbk$ denotes an arbitrary 
algebraically closed field unless otherwise stated. We 
review the questions scattered throughout the previous 
sections and gather them into a few overarching projects, 
and then suggest a couple of further directions of research. 
All these questions and projects should be viewed as 
supplementing (and in some cases repeating) questions in 
other survey articles, for example \cite{And14, And17, Br98, 
Br07, BGi14, BGLZ14, Go13, KWZ19}.

First we note the familiar questions reappearing in this 
paper which concern the entire category of affine Hopf 
algebras of finite GK-dimension, namely Question 
\ref{xxque0.1} and Conjecture \ref{xxcor0.2}. To these 
we add the following, which has been implicit at several 
points in the foregoing pages.

\begin{question}
\label{xxque3.1} 
Is every affine Hopf algebra of finite GK-dimension noetherian?
\end{question}

The only evidence in favor of a positive answer to Question 
\ref{xxque3.1} seems to be the absence of counterexamples. The 
converse is false - consider for example the group algebra 
of a polycyclic group with is not nilpotent-by-finite. It 
does however seem to be unknown whether a noetherian Hopf 
algebra has to be affine.

\subsection{Projects in GK-dimensions one and two}
\label{xxsec3.1} 
The questions listed earlier concerning Hopf algebras of 
GK-dimension 1 are Questions \ref{xxque1.13}, \ref{xxque1.14}, 
\ref{xxque1.15}, and \ref{xxque1.16}. These should be 
regarded as key components on the road towards:  

\begin{project}
\label{xxpro3.2}
Classify semiprime noetherian Hopf $\Bbbk$-algebras of 
GK-dimension one.
\end{project}

\noindent Question \ref{xxque1.18} focuses on the 
possibility of a close link between GK-dimensions one and 
two. It would be answered as an immediate corollary of a 
classification result in GK-dimension two.

The target of classifying all affine or noetherian semiprime 
(or even prime) Hopf algebras of GK-dimension two seems 
rather too ambitious at present, so we propose:

\begin{project}
\label{xxpro3.3}
Classify Hopf $\Bbbk$-algebra domains of GK-dimension two.
\end{project}

Additional hypotheses to impose as milestones on the route 
to Project \ref{xxpro3.3} include those of Question 
\ref{xxque2.13} - namely that $\Bbbk$ has characteristic 
0 and the algebra is affine; and those of Question 
\ref{xxque2.16}, that $\Bbbk$ has positive characteristic 
and the Hopf algebra is affine and connected.

\subsection{Beyond GK-dimension two}
\label{xxsec3.2}
The following special case of Question \ref{xxque0.1} is 
currently open:

\begin{question}
\label{xxpro3.4} 
Does there exist a Hopf $\Bbbk$-algebra $H$ with 
$2 < \mathrm{GKdim}H < 3$?
\end{question}

\noindent Strong evidence in support of a positive answer 
to this question is given by \cite[Theorem 0.1]{WZZ16}, 
which confirms that the answer is "No'' when $\Bbbk$ has 
characteristic 0, with $H$ affine and pointed. In the light 
of their result Wang, Zhang and Zhuang proposed the following, 
suggesting that double Ore extensions may be an important 
tool for this purpose:

\begin{project} \cite[Remark 7.5]{WZZ16}
\label{xxpro3.5} 
In characteristic 0, classify affine pointed Hopf domains 
of GK-dimension three. 
\end{project}

\subsection{Connected (graded) Hopf algebras}
\label{xxsec3.3} 
Observe that if the pointed hypothesis in Project \ref{xxpro3.5} 
is strengthened to require that the Hopf algebra is connected, 
the proposed classification has been achieved by Zhuang 
\cite{Zhu13}, and even in dimension four in \cite{WZZ15}. As 
these results suggest, the class of connected Hopf algebras, 
and even the smaller classes of those which are graded as 
algebras and/or as coalgebras, form interesting testing grounds. 

For example, in \cite{BGZ19}, Brown, Gilmartin and Zhang studied, 
for $\Bbbk$ of characteristic 0,  Hopf $\Bbbk$-algebras of 
finite GK-dimension that are connected graded as an algebra.
In particular, they proved that such a Hopf algebra is a 
noetherian domain which is GK-Cohen-Macaulay, AS regular and 
Auslander regular of global dimension equal to its GK-dimension, 
and is Calabi-Yau. As a consequence, it satisfies Poincar{\' e} 
duality.

Remaining in characteristic 0, but imposing the stronger 
hypothesis on the Hopf algebra $H$ of finite GK-dimension, 
that it is connected graded both as an algebra \emph{and} 
as a coalgebra,  Zhou, Shen and Lu proved the striking 
result \cite{ZSL19} that $H$ is an IHOE \cite[Theorem B]{ZSL19}. 
Naturally, this leads to

\begin{question}\cite[Question C]{ZSL19}
\label{xxpro3.6} 
With $\Bbbk$ of characteristic 0, suppose the Hopf 
$\Bbbk$-algebra $H$ has finite GK-dimension and is 
connected graded as an algebra. Is H an IHOE?
\end{question}

\subsection*{Acknowledgments}
\label{xxsec5}
The authors thank the editors for the invitation to provide this 
submission, and thank Professors Gongxiang Liu and Dingguo Wang 
for several conversations on the subject, for reading an early 
draft of the paper, for suggesting some projects. K.A. Brown was 
partially supported by Leverhulme Emeritus Fellowship 
(EM-2017-081-9) and J.J. Zhang by the US National Science 
Foundation (No. DMS-1700825).


\begin{thebibliography}{10}

\bibitem[And14]{And14}
N. Andruskiewitsch,
\emph{On finite-dimensional Hopf algebras},  
Proceedings of the International Congress of Mathematicians, 
Seoul (2014) Vol. II, 117--141, Kyung Moon Sa, Seoul, 2014. 

\bibitem[And17]{And17}
N. Andruskiewitsch,
\emph{An introduction to Nichols algebras},
Quantization, geometry and noncommutative structures 
in mathematics and physics, 135--195, 
Math. Phys. Stud., Springer, 2017. 

\bibitem[AA06]{AA06}
N. Andruskiewitsch and I.E. Angiono, 
\emph{On Nichols algebras with generic braiding}, 
in Modules and Comodules (Porto 2006) 
(Brzezi{\' n}ski T., et al., Editors) 
(Birkh{\" a}user Verlag, Basel, 2008), 47--64.

\bibitem[AAH19]{AAH19}
N. Andruskiewitsch, I.E. Angiono and I. Heckenberger, 
\emph{On finite GK-dimensional Nichols algebras of diagonal type}. 
Tensor categories and Hopf algebras, 1--23, Contemp. Math., 
{\bf 728}, Amer. Math. Soc., Providence, RI, 2019.

\bibitem[AS00]{AS00}
N. Andruskiewitsch and H.-J. Schneider, 
\emph{Finite quantum groups and Cartan matrices},
Adv. Math. {\bf 154} (2000), 1--45.

\bibitem[AS04]{AS04}
N. Andruskiewitsch and H.-J. Schneider, 
\emph{A characterization of quantum groups,} 
J. Reine Angew. Math. {\bf 577} (2004), 81--104.

\bibitem[Ang13]{Ang13} 
I.E. Angiono, 
\emph{On Nichols algebras of diagonal type}, 
J. Reine Angew. Math. {\bf 683} (2013), 189--251.

\bibitem[Bo91]{Bo91}
A. Borel, 
{\it Linear Algebraic Groups}, 
second enlarged ed., Springer-Verlag, New York, 
1991.


\bibitem[Br98]{Br98}
K.A. Brown, 
{\it Representation theory of Noetherian Hopf algebras satisfying a
polynomial identity}, 
in Trends in the representation theory of finite dimensional algebras 
(Seattle 1997) (Green E. L. and Huisgen-Zimmermann B., Editors), 
Contemporary Mathematics, Vol.
{\bf 229} (AMS, Providence, RI, 1998), 49--79.


\bibitem[Br07]{Br07}
K.A. Brown, 
{\it Noetherian Hopf algebras}, 
Turkish J. Math. 
{\bf 31} (2007), suppl., 7--23.

\bibitem[BGi14]{BGi14}
K.A. Brown and P. Gilmartin, 
{\it Hopf algebras under finiteness conditions}, 
Palest. J. Math. 
{\bf 3} (2014), Special issue, 356--365. 

\bibitem[BGZ19]{BGZ19}
K.A. Brown, P. Gilmartin and J.J. Zhang, 
\emph{Connected (graded) Hopf algebras}, 
Trans. Amer. Math. Soc. {\bf 372} (2019), no. 5, 3283--3317.


\bibitem[BG97]{BG97}
K.A. Brown and K.R. Goodearl,
\emph{Homological aspects of Noetherian PI Hopf algebras
and irreducible modules of maximal dimension}, 
J. Algebra {\bf 198} (1997), 240--265.

\bibitem[BGLZ14]{BGLZ14}
K.A. Brown, K.R. Goodearl, T. Lenagan and J.J. Zhang,
\emph{Mini-Workshop: infinite dimensional Hopf algebras}, 
Abstracts from the mini-workshop held April 13–19, 2014, 
Oberwolfach Rep. {\bf 11} (2014), no. 2, 1111--1137. 


\bibitem[BM17]{BM17}
K.A. Brown and M.J. Macleod,
\emph{The Cohen Macaulay property for noncommutative rings} 
Algebr. Represent. Theory {\bf 20} (2017), no. 6, 1433–1465. 

\bibitem[BO15]{BO15} 
K.A. Brown, S. O'Hagan, J.J Zhang, G. Zhuang, 
\emph{Connected Hopf algebras and iterated Ore extensions}, 
J. Pure Appl. Algebra, {\bf 219} (2015), no. 6, 2405--2433.



\bibitem[BZ08]{BZ08}
K.A. Brown and J.J. Zhang,
{\it Dualising complexes and twisted Hochschild (co)homology 
for Noetherian Hopf algebras}, 
J. Algebra {\bf 320} (2008), no. 5, 1814--1850.


\bibitem[BZ10]{BZ10}
K.A. Brown and J.J. Zhang,
{\it Prime regular Hopf algebras of GK-dimension one}, 
Proc. Lond. Math. Soc. (3) {\bf 101} (2010), no. 1, 260--302. 

\bibitem[BZ20]{BZ20}
K.A. Brown and J.J. Zhang,
{\it Iterated Hopf Ore Extensions in positive characteristic},
in preparation, 2020. 

\bibitem[Co77]{Co77} 
P.M. Cohn, {\it Algebra.} Vol. 2. With errata to Vol. I. 
John Wiley \& Sons, London-New York-Sydney, 1977.


\bibitem[EGNO15]{EGNO15}
P. Etingof, Pavel. S. Gelaki, D. Nikshych, V. Ostrik, 
\emph{Tensor categories}
Mathematical Surveys and Monographs, 205,
Amer. Math. Soc., Providence, RI, 2015.


\bibitem[Go13]{Go13}
K.R. Goodearl, 
{\it Noetherian Hopf algebras}, 
Glasgow Math. J. 
{\bf 55} (2013), 75--87.

\bibitem[GZ10]{GZ10}
K.R. Goodearl, J.J. Zhang, 
{\it Noetherian Hopf algebra domains of Gelfand-Kirillov 
dimension two}, 
J. Algebra, {\bf 324} (2010), 3131--3168.

\bibitem[GZ17]{GZ17}
K.R. Goodearl, J.J. Zhang, 
{\it Non-affine Hopf algebra domains of Gelfand-Kirillov 
dimension two},
Glasgow Math. J. {\bf 59} (2017), no. 3, 563--593. 


\bibitem[Hu19]{Hu19}
H.-D. Huang, 
\emph{Hopf Ore extensions},
preprint, (2019)	arXiv:1902.02237.

\bibitem[HK96]{HK96}
C. Huh and C.O. Kim, 
{\it Gelfand-Kirillov dimension of skew polynomial rings of automorphism type},
Comm. Algebra {\bf 24} (1996), 2317-2323. 


\bibitem[Hu75]{Hu75}
J.E. Humphreys, 
\emph{Linear Algebraic Groups,} 
Graduate Texts in Mathematics, {\bf 21},
Springer-Verlag, New York-Heidelberg, 1975.

\bibitem[KMT74]{KMT74}
T. Kambayashi, M. Miyanishi and M. Takeuchi,
\emph{Unipotent Algebraic Groups},
Lecture Notes in Math. {\bf 414}, Springer, 1974. 

\bibitem[KL00]{KL00}
G.R. Krause and T.H. Lenagan,
\emph{Growth of algebras and Gelfand-Kirillov dimension},
Revised edition, Graduate Studies in Mathematics, {\bf 22}, 
Amer. Math. Soc., 2000. 

\bibitem[KWZ19]{KWZ19}
H. Krause, S. Witherspoon and J.J.  Zhang, 
\emph{Mini-Workshop: Cohomology of Hopf Algebras and 
Tensor Categories}, Oberwolfach Rep. {\bf 16} (2019), no. 1, 
663--693.

\bibitem[Li09]{Li09}
G. Liu, 
{\it On Noetherian affine prime regular Hopf algebras 
of Gelfand-Kirillov dimension 1},
Proc. Amer. Math. Soc. {\bf 137} (2009), no. 3, 777--785.

\bibitem[Li20]{Li20}
G. Liu,
\emph{A classification result on prime Hopf algebras of 
GK-dimension one},
J. of Algebra, {\bf 547} (2020), 579--667.

\bibitem[Li20a]{Li20a}
G. Liu, Oral communication, (2020).


\bibitem[LWZ07]{LWZ07}
D.-M. Lu, Q.-S. Wu and J.J. Zhang, 
\emph{Homological integral of Hopf algebras},
Trans. Amer. Math. Soc. {\bf 359} (2007), 4945–4975. 

\bibitem[LWZ09]{LWZ09}
D.-M. Lu, Q.-S. Wu and J.J. Zhang, 
\emph{Hopf algebras with rigid dualizing complexes}, 
Israel J. Math. {\bf 169} (2009), 89--108.

\bibitem[Mo93]{Mo93} 
S. Montgomery, 
\emph{Hopf Algebras and their Actions on Rings}, CBMS Regional Conference
Series in Mathematics 82 (AMS, Providence, 1993).

\bibitem[Pan03]{Pan03}
A.N. Panov, 
\emph{Ore extensions of Hopf algebras}, 
Mat. Zametki, {\bf 74} (2003), 425--434.

\bibitem[Pas77]{Pas77} 
D.S. Passman, 
\emph{The Algebraic Structure of Group Rings}, 
Wiley-Interscience, 1977.



\bibitem[SW84]{SW84}
L.W. Small and R.B. Warfield, Jr., 
\emph{Prime affine algebras of Gelfand-Kirillov dimension one}, 
J. Algebra  {\bf 91}  (1984),  no. 2, 386–389. 

\bibitem[SSW85]{SSW85}
L.W. Small,  J.T. Stafford, R.B. Warfield, R. B., 
\emph{Affine algebras of Gel'fand-Kirillov dimension one are PI}, 
Math. Proc. Camb. Phil. Soc.  {\bf 97}  (1985),  no. 3, 407–414. 


\bibitem[Ta71]{Ta71}
E.J. Taft, \emph{The order of the antipode of finite-dimensional Hopf algebra},
Proc. Nat. Acad. Sc.  {\bf 68}  (1971),  2631-2633. 

\bibitem[Wa20]{Wa20}
D.-G. Wang, 
Oral communication, (2020).

%\bibitem[WZZ13a]{WZZ13a}
%D.-G. Wang, J.J. Zhang and G. Zhuang, 
%\emph{Lower bounds of Growth of Hopf algebras}, 
%Trans. Amer. Math. Soc. {\bf 365} (2013) 4963--4986.


\bibitem[WZZ13]{WZZ13}
D.-G. Wang, J.J. Zhang and G. Zhuang,
\emph{Hopf algebras of GK-dimension two with vanishing Ext-group}, 
J. Algebra {\bf 388} (2013) 219–247.

\bibitem[WZZ15]{WZZ15}
D.-G. Wang, J.J. Zhang and G. Zhuang,
\emph{Connected Hopf algebras of Gelfand-Kirillov dimension four}, 
Trans. Amer. Math. Soc. {\bf 367} (2015) 5597--5632.

\bibitem[WZZ16]{WZZ16}
D.-G. Wang, J.J. Zhang and G. Zhuang, 
\emph{Primitive cohomology of Hopf algebras}, 
J. Algebra {\bf 464} (2016), 36--96. 

\bibitem[WLD16]{WLD16}
J.-Y. Wu, G.-X. Liu and N.-Q. Ding, 
\emph{Classification of affine prime regular Hopf algebras of 
GK-dimension one}, 
Adv. Math. {\bf 296} (2016), 1--54.

\bibitem[WZ03]{WZ03}
Q.-S. Wu and J. J. Zhang, 
\emph{Noetherian PI Hopf algebras are Gorenstein}, 
Trans. Amer. Math. Soc. {\bf 355} (2003) 1043--1066.

\bibitem[ZSL19]{ZSL19}
G.-S. Zhou, Y. Shen and D.-M. Lu,
\emph{The structure of connected (graded) Hopf algebras},
preprint, (2019) arXiv:1904.01918.

\bibitem[Zha97]{Zha97}
J.J. Zhang, 
\emph{A note on GK dimension of skew polynomial extensions}, 
Proc. Amer. Math. Soc. {\bf 125} (1997), no. 2, 363--373. 

\bibitem[Zhu13]{Zhu13} 
G.-B. Zhuang, 
\emph{Properties of pointed and connected Hopf algebras of 
finite Gelfand-Kirillov dimension}, 
J. London Math. Soc. (2) {\bf 87} (2013), 877-898.

\end{thebibliography}
\end{document}